\documentclass[11pt,a4paper]{article}
\usepackage{amsthm}
\usepackage{amsmath,amsfonts,amssymb,graphics,verbatim,fullpage,bbm}
\usepackage[usenames,dvipsnames]{color}

\ifx\pdfoutput\undefined
\usepackage[dvips,bookmarks]{hyperref}
\else
\usepackage{hyperref}
 \pdfadjustspacing=1
\fi
\hypersetup{%
pdftitle={A Levy input fluid queue with input and workload regulation},
pdfauthor={Palmowski, Vlasiou, Zwart},
pdfkeywords={stochastic systems},
bookmarks=true,%
bookmarksnumbered=true,
pdfstartview={FitH},
colorlinks=true,
citecolor=Plum,
linkcolor=RedOrange,
urlcolor=RawSienna,
bookmarksopen=true,%
bookmarksopenlevel=1,
unicode=true,%
breaklinks=true,%
}%

\def\te#1{\mathrm{e}^{#1}}
\def\td{\text{\rm d}}
\numberwithin{equation}{section}

\newtheorem{theorem}{Theorem}[section]
\newtheorem{lem}{Lemma}[section]

\newcommand{\eqdistr}{\stackrel{{\cal D}}{=}}

\theoremstyle{definition}
\newtheorem*{remark}{Remark}
\newtheorem{Ex}{Example}

\begin{document}
\title{A L\'{e}vy input fluid queue with input and workload regulation}
\author{Zbigniew Palmowski\footnote{University of Wroc\l aw, pl.\ Grunwaldzki 2/4, 50-384 Wroc\l aw, Poland, E-mail: zpalma@math.uni.wroc.pl} \qquad Maria Vlasiou\footnote{\textsc{Eurandom} and Department of Mathematics and Computer Science; Eindhoven University of Technology; P.O.\ Box 513; 5600 MB Eindhoven; The Netherlands, E-mail: m.vlasiou@tue.nl}\qquad Bert Zwart \footnote{Centrum Wiskunde \& Informatica, Science Park 123, Amsterdam, The Netherlands, Email: bert.zwart@cwi.nl}}
\maketitle

\begin{abstract}
We consider a queuing model with the workload evolving between consecutive i.i.d.\ exponential timers $\{e_q^{(i)}\}_{i=1,2,\ldots}$ according to a spectrally positive L\'{e}vy process $Y_i(t)$ that is reflected at zero, and where the environment $i$ equals 0 or 1. When the exponential clock $e_q^{(i)}$ ends, the workload, as well as the L\'evy input process, are modified; this modification may depend on the current value of the workload, the maximum and the minimum workload observed during the previous cycle, and the environment $i$ of the L\'evy input process itself during the previous cycle. We analyse the steady-state workload distribution for this model. The main theme of the analysis is the systematic application of non-trivial functionals, derived within the framework of fluctuation theory of L\'evy processes, to workload and queuing models.
\end{abstract}

\section{Introduction}

In this paper we focus on a particular queuing system with state-dependent two-stage regulation mechanism. There has been considerable previous work on queues with state-dependent service and arrival processes; see for example the survey by Dshalalow~\cite{dshalalow97} for several references. The model under consideration involves a reflected  L\'evy process connected to the evolution of the workload. Special cases of L\'evy processes are the compound Poisson process, the Brownian motion, linear drift processes, and independent sums of the above.

Specifically, in this paper we consider a storage/workload model in which the workload evolves according to a spectrally positive L\'{e}vy process $Y_0(t)$ or $Y_1(t)$, both reflected at zero. That is, let $X_i(t)$ ($i=0, 1$) be two independent spectrally positive L\'{e}vy processes (i.e.\ L\'{e}vy processes with only positive jumps) modelling the input minus the output of the process and define $-\inf_{s\leqslant 0} X_i(s)=0$ and $X_i(0)=x\geqslant 0$. Then we have that $Y_i(t)=X_i(t)-\inf_{s\leqslant t} X_i(s)$ (where $Y_i(0)=x$ for some initial workload $x\geqslant 0$). In addition, at exponential times with intensity $q$, given by $\{e_q^{(i)}\}_{i=1,2,\ldots}$, the workload is ``reset'' to a certain level, depending on the workload level just before the exponential clock ends, its minimum and maximum in the previous cycle, and the environment of the reflected L\'{e}vy process. Specifically, at epoch $t=e^{(1)}_q+\cdots+e^{(n)}_q$, the workload $V(t)$ equals $F_n^{(1)}(V(t-))$ for some random nonnegative i.i.d.\  functionals $F_n: [0,\infty)\times \{0,1\}^3 \rightarrow [0,\infty)\times \{0,1\} $, where $F^{(1)}_n$ and $F^{(2)}_n$ denote the first and second coordinate of $F_n$, respectively. Moreover, at these moments the feedback information is available
and the label of the Laplace exponent is changed according to functional $F^{(2)}$.

This model unifies and extends several related models in various directions. First, in a previous paper \cite{palmowski11}, we considered the case where $X_0=X_1$, and where the workload correction depended on the current workload only. Second, if $X_0=X_1=X$ is a compound Poisson process and if $F_n$ is the identity function, then our model reduces to the workload process of the M/G/1 queue. Next, Kella et al.~\cite{kella03} considered a  model with workload removal, which fits into our model by taking $F_n(x)=0$ and by letting the  spectrally positive L\'{e}vy process $X_0=X_1=X$ be a Brownian motion superposed with an independent compound Poisson component. Our work is also related to papers \cite{bekker08, bekker09}, who considered the adaptation of the Laplace exponent of the input process at the moments when the workload process crosses level $K$. Bekker et al.~\cite{bekker09} analysed also the adaptation of the process at Poisson instants but without the possibility of additional regulation at these moments and without taking into account the supremum of the workload process between exponential timers.

The model we consider can be also seen as a L\'evy-driven queue. Queues and L\'evy processes have received significant attention recently, as described in the excellent survey by D\c{e}bicki and Mandjes \cite{debicki12}. Indeed, a point of this paper is that one can use the joint law of a reflected L\'evy process at an exponential time as well as it supremum and infimum to effectively analyse queuing and inventory models where specific workload corrections take place based on whether an extremely high and/or low workload has been reached since the previous point of inspection. We discuss several examples to illustrate this point. In addition, we derive two types of qualitative results. We prove stability by showing that an embedded Markov chain is Harris recurrent. The proof involves analysing the two-step transition kernel. In addition, we consider the tail behavior of the invariant distribution, building on results in \cite{palmowski11}. The examples for which we analyse the entire distribution are, respectively, a clearing-type model, a model related to the TCP protocol, and an inventory model.

The paper is organised as follows. In Section~\ref{s:Model description} we demonstrate model we deal with. In Section~\ref{s:Preliminaries} we introduce a few basic facts concerning spectrally positive L\'{e}vy processes. In Section~\ref{s:embedded} we consider the embedded workload process and derive a recursive equation for its stationary distribution. By the PASTA property this determines the steady-state workload distribution. Later on, in Section~\ref{s:examples} we present some special cases. Finally, in Section \ref{tail} we focus on the tail behaviour of the steady-state workload.


\section{Model description}\label{s:Model description}

We consider a L\'evy input model, with the additional feature that at exponential $(q)$ times both the input process and the workload can be corrected.

 Let $e_q^{(i)}$ be an i.i.d.\ sequence of exponential $(q)$ random variables. Let $S_n = e^{(1)}_q+\cdots +e^{(n)}_q$, and let $N(t) = \max\{n: S_n\leqslant t\}$ be a Poisson process with rate $q$. At times $S_n$, $n\geqslant 1$, inspection takes place by a controller, who can decide to change the workload input for the interval $[S_n,S_{n+1})$, as well as the workload level itself. In any given interval $[S_n,S_{n+1})$, the workload process $V(t)$ is driven by a spectrally positive L\'evy process with Laplace exponent $\psi_0$ or $\psi_1$, depending on the choice of the controller at time $S_n$.

Apart from the workload level $V(t)$, the choice of the controller can depend on the following quantities: let $K$ and $L$ be two constants, called the upper and lower threshold and define
\begin{align*}
J_S(t) &= \mathbbm{1}( \sup_{r\in [S_{N(t)},t)} V(r) \geqslant K ),\\
J_I(t) &= \mathbbm{1}( \inf_{r\in [S_{N(t)},t)} V(r) \leqslant L ),\\
J_E(t) &\in \{0,1\},\\
W(t) &=(V(t),J_S(t),J_I(t),J_E(t)).
\end{align*}
During $[S_n,S_{n+1})$, $V(t)$ is driven by a spectrally positive L\'evy process with exponent $\psi_{J_E(t)}$.
At time $S_{n+1}-$, the workload level and the input process are modified according to a random function $F_{n+1}: [0,\infty)\times \{0,1\}^3\rightarrow [0,\infty)\times \{0,1\} $, where $F^{(1)}_{n+1}$ and $F^{(2)}_{n+1}$ denote the first and second coordinate of $F_{n+1}$, respectively. Specifically, the workload at time $S_{n+1}$ becomes
\begin{equation*}
V(S_{n+1}) = F^{(1)}_{n+1}(W(S_{n+1}-)),
\end{equation*}
and the Laplace exponent of the L\'evy input process $J_E(S_{n+1})$ becomes
\begin{equation*}
J_E(S_{n+1}) = F^{(2)}_{n+1}(W(S_{n+1}-)).
\end{equation*}
The idea of the analysis is as follows. We are interested in the steady state distribution of the process we have just defined. By PASTA, it suffices to analyse the embedded Markov chain $W_n= W(S_{n}-)=:(U_n, {J}_n)$ with ${J}_n=(J_{S}(S_{n}-), J_{I}(S_{n}-), J_{E}(S_{n}-))$ ($n\geqslant 1$). Thus, $U_n=V(S_n-)$. The transition kernel of this Markov chain can be computed by building on various recent results on reflected spectrally one-sided L\'evy processes. The next section will focus on this.

The remainder of this section mentions some illustrative examples of the function $F\stackrel{d}{=}F_1$. Let $u\geqslant 0$ and $\bar j=(j_1,j_2,j_3)$.  Examples on how one may change the workload are:
\begin{enumerate}
\item $F^{(1)}(u, \bar j)= B_{\bar j}$ (this includes clearing models, where $B_{\bar j}=0$, considered in Kella et al.~\cite{kella03});
\item $F^{(1)}(u, \bar j)= u+B_{\bar j}$ (inventory models where additional goods are ordered, for example $R$ units are ordered if the inventory has dropped below a specific value $L$, leading to $B_{\bar j} =R\cdot \mathbbm{1}(j_2=1)$);
\item $F^{(1)}(u, \bar j)= (B_{\bar j}-u)^+$ (considered in Vlasiou \& Palmowski \cite{palmowski11});
\item $F^{(1)}(u, \bar j)= u\cdot \mathbbm{1}(j_2=0)+ R\cdot \mathbbm{1}(j_2=1)$ (set inventory to level $R$ if it drops below $L$ and do nothing otherwise).
\end{enumerate}
Examples one could think of for the second coordinate are of the type that input is slowed down when a high level is reached, and/or speed up when a low level is reached. Suppose that $Y_1(t)$ represents a high input with respect to $Y_0(t)$. Then, examples of how to modify the second coordinate are:
\begin{enumerate}
\item $F^{(2)}(u,j_1,1,j_3)=1$, $F^{(2)}(u,j_1,0,j_3)=0$ (choose high input if the workload was below $L$ and choose low input otherwise);
\item $F^{(2)}(u,1,j_2,j_3)=0$, $F^{(2)}(u,0,j_2,j_3)=1$ (choose high input if the workload did not exceed $K$ and choose low input otherwise);
\item $F^{(2)}(u,j_1,j_2,j_3)=\mathbbm{1}(u<K)$ (choose high input iff the workload was less than $K$);
\item $F^{(2)}(u,j_1,j_2,j_3)=j_3$ (never change environment).
\end{enumerate}

In Section~\ref{s:examples}, we investigate a subset of these combinations. Additionally, we consider the case $F^{(1)}(u, \bar j)=\delta u$ (TCP). Further examples can be found in the literature, in particular in \cite{bekker08,bekker09}. Note that in some of the models considered in those papers, the workload is adapted instantaneously when a high level is reached, rather than after an exponential time. In principle one could recover such results from our model by letting $q\rightarrow\infty$.

\section{Preliminaries on L\'evy processes}\label{s:Preliminaries}

Let $X_i(\cdot)$, $i=0,1$, be two spectrally positive L\'evy processes. Throughout this paper we exclude the case where the L\'evy processes $X_i$ have monotone paths. Let the dual process of $X_i(t)$ be given by $\hat{X}_i(t)=-X_i(t)$. The process $\{\hat{X}_i(s), s\leqslant t\}$ is a spectrally negative L\'{e}vy process and has the same law as the time-reversed process $\{X_i((t-s)-)-X_i(t), s\leqslant t\}$. Following standard conventions, let $\underline{X}_i(t)=\inf_{s\leqslant t}X_i(s)$, $\overline{X}_i(t)=\sup_{s\leqslant t}X_i(s)$ and similarly $\underline{\hat{X}}_i(t)=\inf_{s\leqslant t}\hat{X}_i(s)$, and $\overline{\hat{X}}_i(t)=\sup_{s\leqslant t}\hat{X}_i(s)$. One can readily see that the processes $Y_i(t)=X_i(t)-\underline{X}_i(t)$ (for $Y_i(0)=0$) and  $\overline{X}_i(t)$ (where $X_i(0)=0$) have the same distribution; see e.g.\ Kyprianou~\cite[Lemma 3.5, p.\ 74]{kyprianou-ILFLP}. Moreover,
$$
-\underline{X}_i(t)\eqdistr \overline{\hat{X}}_i(t),\qquad \overline{X}_i(t)\eqdistr -\underline{\hat{X}}_i(t).
$$

Since the jumps of $\hat{X}_i$ are all non-positive, the moment generating function $E[\te{\theta \hat{X}_i(t)}]$ exists for all $\theta \geqslant 0$ and is given by $E[\te{\theta \hat{X}_i(t)}] = \te{t\psi_i(\theta)}$. The Laplace exponent $\psi_i(\theta)$ is well defined at least on the positive half-axis where it is strictly convex with the property that $\lim_{\theta\to\infty}\psi_i(\theta)=+\infty$. Moreover, $\psi_i$ is strictly increasing on $[\Phi_i(0),\infty)$, where $\Phi_i(0)$ is the largest root of $\psi_i(\theta)=0$. We shall denote the right-inverse function of $\psi_i$ by $\Phi_i:[0,\infty)\to[\Phi(0),\infty)$.

Denote by $\sigma_i$ the Gaussian coefficient and by $\nu_i$ the L\'{e}vy measure of $\hat{X}_i$ (note that $\sigma_i$ is also a Gaussian coefficient of $X_i$ and that $\Pi_i(A)=\nu_i(-A)$ is a jump measure of $X_i$).  Throughout this paper we assume that the following (regularity) condition is satisfied:
\begin{equation}\label{eq:condW}
\sigma_i > 0 \quad \text{or}\quad \int_{-1}^0 x \nu_i(\td x) = \infty \quad \text{or}\quad \nu_i(\td x)<<\td x
\end{equation}
for $i=0,1$, where $<< \td x $ means absolutely continuity with respect to the Lebesgue measure.
Finally, $P_{i,x}$ denotes the probability measure $P$ under the condition that $J_E(0)=i$ for the label process $J_E$, $X_i(0)=x$, and $E_{i,x}$ indicates the expectation with respect to $P_{i,x}$. If $X_i(0)=0$ we will skip the subscript $x$.

\subsection{Scale functions}\label{ss:scale}
For $q\geqslant0$ and $i=0,1$, there exists a function $W^{(q)}_i: [0,\infty) \to [0,\infty)$, called the {\it $q$-scale function}, that is continuous and increasing with Laplace transform
\begin{equation}
\int_0^\infty \te{-\theta y} W^{(q)}_i (y)  \td y = (\psi_i(\theta) - q)^{-1},\qquad\theta > \Phi_i(q).
\end{equation}
The domain of $W^{(q)}_i$ is extended to the entire real axis by setting $W^{(q)}_i(y)=0$ for $y<0$. We mention here some properties of the function $W^{(q)}_i$
that have been obtained in the literature which we will need later on.

On $(0,\infty)$ the function $y\mapsto W^{(q)}_i(y)$ is right- and left-differentiable and, as shown in \cite{lambert00}, under  condition \eqref{eq:condW}, it holds that $y\mapsto W^{(q)}_i(y)$ is continuously differentiable for $y>0$.

Closely related to $W^{(q)}_i$ is the function $Z^{(q)}_i$ given by
$$
Z^{(q)}_i(y) = 1 + q\int_0^yW^{(q)}_i(z)\td z.
$$
The name ``$q$-scale function'' for $W^{(q)}_i$ and $Z^{(q)}_i$ is justified as these functions are harmonic for the process $\hat{X}$ killed upon entering $(-\infty,0)$. Here, we give a few examples of scale functions. For a large number of examples of scale functions see e.g.\ Chaumont et al.~\cite{chaumont09}, Hubalek and Kyprianou~\cite{hubalek11}, Kyprianou and Rivero~\cite{kyprianou08a}.

\begin{Ex}\label{Brownianwithdrift}
If $X(t)=X_1(t)=X_2(t) = \sigma B(t) - \mu t$ is a Brownian motion with drift $\mu$ (a standard model for small service requirements) then
$$
W^{(q)}(x) =W^{(q)}_1(x)=W^{(q)}_2(x)= \frac{1}{\sigma^2\delta} [\te{(-\omega+\delta) x} -
\te{-(\omega+\delta)x}],
$$
where $\delta = \sigma^{-2}\sqrt{\mu^2 + 2q\sigma^2}$ and $\omega = \mu/\sigma^2$.
\end{Ex}

\begin{Ex}\label{eg:compound}
Suppose
\begin{equation}\label{compound}
X(t)=X_1(t)=X_2(t)= \sum_{i=1}^{N(t)}\sigma_i-pt,
\end{equation}
where $p$ is the speed of the server and $\{\sigma_i\}$ are i.i.d.\  service times that are coming according to the Poisson process $N(t)$ with intensity $\lambda$. We assume that all $\sigma_i$ are exponentially distributed with mean $1/\mu$.  Then $\psi(\theta)=p\theta-\lambda \theta/(\mu+\theta)$ and the scale function of the dual $W^{(q)}=W^{(q)}_1=W^{(q)}_2$ is given by
\begin{equation}\label{scale:compound}
W^{(q)}(x) = p^{-1}\left(A_+ \te{q^+(q)x} - A_- \te{q^-(q)x}\right),
\end{equation}
where $A_\pm = \frac{\mu + q^\pm(q)}{q^+(q)-q^-(q)}$ with
$q^+(q)=\Phi(q)$ and $q^-(q)$ is the smallest root of $\psi(\theta)=q$:
$$
q^\pm(q)=\frac{{q} + \lambda -\mu p \pm \sqrt{({q} + \lambda-\mu p)^2 + 4 p {q}\mu}}{2 p}.
$$
\end{Ex}

\subsection{Fluctuation identities}\label{ss:identities}

The functions $W^{(q)}_i$ and $Z^{(q)}_i$ ($i=0,1$) play a key role in the fluctuation theory of reflected processes as shown by the following identity (see Bertoin~\cite[Th.\ VII.4 on p.\ 191 and (3) on p.\ 192]{bertoin-LP} or Kyprianou and Palmowski~\cite[Th.\ 5]{kyprianou05}).

\begin{lem}\label{exitident}
For $\alpha >0$,
$$
E\left( \te{-\alpha \overline{X}_i(e_{q})}\right) =\frac{q(\alpha -\Phi_i \left( q\right) )}{\Phi_i \left( q\right) (\psi_i \left( \alpha \right) -q)}\ ,
$$
which is equivalent to
\begin{equation*}
P(\overline{X}_i(e_q)\in \td x)=\frac{q}{\Phi_i \left( q\right) } W^{(q)}_i(\td x)-qW^{(q)}_i(x)\td x,\qquad x>0.
\end{equation*}
Moreover, $-\underline{X}_i(e_q)$ follows an exponential distribution with parameter $\Phi(q)$.
\end{lem}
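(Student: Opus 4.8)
The plan is to deduce the lemma from classical Wiener--Hopf identities for the spectrally \emph{negative} dual $\hat{X}_i=-X_i$. By the relations $\overline{X}_i(t)\eqdistr-\underline{\hat{X}}_i(t)$ and $-\underline{X}_i(t)\eqdistr\overline{\hat{X}}_i(t)$ recorded above (which hold for each fixed $t$ and hence, since $e_q$ is independent of $X_i$, also with $t$ replaced by $e_q$), it suffices to identify the law of $\overline{\hat{X}}_i(e_q)$ and the Laplace transform of $-\underline{\hat{X}}_i(e_q)$ for the spectrally negative L\'evy process $\hat{X}_i$, which has Laplace exponent $\psi_i$ and right inverse $\Phi_i$.

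First I would treat the supremum. Put $\tau_x^+=\inf\{t\ge0:\hat{X}_i(t)>x\}$. Because $\hat{X}_i$ has no positive jumps it creeps upward, so $\hat{X}_i(\tau_x^+)=x$ on $\{\tau_x^+<\infty\}$; applying optional stopping to the martingale $\te{\Phi_i(q)\hat{X}_i(t)-qt}$ at $\tau_x^+$ gives $E[\te{-q\tau_x^+}]=\te{-\Phi_i(q)x}$. Since $e_q$ is an independent exponential clock, $P(\overline{\hat{X}}_i(e_q)\ge x)=P(\tau_x^+\le e_q)=E[\te{-q\tau_x^+}]=\te{-\Phi_i(q)x}$, so $\overline{\hat{X}}_i(e_q)$, and therefore $-\underline{X}_i(e_q)$, is exponential with parameter $\Phi_i(q)$; this is the last assertion of the lemma.

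Next I would combine this with the Wiener--Hopf factorisation $E[\te{\alpha\hat{X}_i(e_q)}]=E[\te{\alpha\overline{\hat{X}}_i(e_q)}]\,E[\te{\alpha\underline{\hat{X}}_i(e_q)}]$, valid for all $\alpha\ge0$ by analytic continuation because $\hat{X}_i$ is spectrally negative. Using $E[\te{\alpha\hat{X}_i(e_q)}]=q/(q-\psi_i(\alpha))$ and $E[\te{\alpha\overline{\hat{X}}_i(e_q)}]=\Phi_i(q)/(\Phi_i(q)-\alpha)$ from the previous step, one obtains
\[
E\bigl(\te{-\alpha\overline{X}_i(e_q)}\bigr)=E\bigl[\te{\alpha\underline{\hat{X}}_i(e_q)}\bigr]=\frac{q(\alpha-\Phi_i(q))}{\Phi_i(q)(\psi_i(\alpha)-q)},
\]
which is the first displayed identity. (Alternatively, this identity may simply be quoted from Bertoin~\cite[Th.\ VII.4, p.\ 191 and (3), p.\ 192]{bertoin-LP} or Kyprianou and Palmowski~\cite[Th.\ 5]{kyprianou05}.)

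Finally I would pass to the equivalent form by Laplace inversion against the defining transform of $W^{(q)}_i$. Integration by parts in $\int_0^\infty\te{-\theta y}W^{(q)}_i(y)\,\td y=(\psi_i(\theta)-q)^{-1}$ gives $\int_{[0,\infty)}\te{-\theta x}W^{(q)}_i(\td x)=\theta/(\psi_i(\theta)-q)$ for $\theta>\Phi_i(q)$, hence
\[
\frac{q}{\Phi_i(q)}\int_{[0,\infty)}\te{-\theta x}W^{(q)}_i(\td x)-q\int_0^\infty\te{-\theta x}W^{(q)}_i(x)\,\td x=\frac{q(\theta-\Phi_i(q))}{\Phi_i(q)(\psi_i(\theta)-q)}=E\bigl(\te{-\theta\overline{X}_i(e_q)}\bigr)
\]
for $\theta>\Phi_i(q)$; by uniqueness of Laplace transforms, $\tfrac{q}{\Phi_i(q)}W^{(q)}_i(\td x)-qW^{(q)}_i(x)\,\td x$ is the law of $\overline{X}_i(e_q)$ on $[0,\infty)$, its possible atom at $0$ of mass $\tfrac{q}{\Phi_i(q)}W^{(q)}_i(0)$ being carried by the Stieltjes term and nonnegativity of the density on $(0,\infty)$ then being automatic. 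The conceptual crux is the creeping/optional-stopping step, which is exactly where spectral one-sidedness is used; the only mildly delicate technical points are the extension of the spectrally negative Wiener--Hopf factor to real exponents --- so that one may work with Laplace rather than Fourier transforms throughout --- and the bookkeeping of the atom of $W^{(q)}_i$ at the origin in the inversion, both of which are routine.
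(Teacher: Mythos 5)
Your proof is correct. The paper does not itself prove this lemma but simply cites Bertoin~\cite[Th.\ VII.4, (3)]{bertoin-LP} and Kyprianou and Palmowski~\cite[Th.\ 5]{kyprianou05}; your argument reconstructs the classical derivation underlying those citations: upward creeping plus optional stopping for the exponential law of the supremum of the spectrally negative dual (hence of $-\underline{X}_i(e_q)$), Wiener--Hopf factorisation for the transform of the infimum of the dual (hence of $\overline{X}_i(e_q)$), and Laplace inversion against the defining identity of $W^{(q)}_i$ to pass to the measure form. The only steps you leave implicit -- localisation in the optional stopping argument (stop at $\tau_x^+\wedge n$ and use boundedness of the exponential martingale on $[0,\tau_x^+]$) and the analytic continuation of the factor identity from $\alpha<\Phi_i(q)$ through the removable singularity at $\alpha=\Phi_i(q)$ to all $\alpha>0$ -- are indeed routine, as you note.
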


Let $\tau_i^0=\inf\{t\geqslant 0: X_i(t)<0\}$. The $q$-scale function gives also the density $r^{(q)}_i(x,y)=R^{(q)}_i(x,\td y)/\td y$ of the $q$-potential measure
\begin{equation}\label{Rqgeneral}
R^{(q)}_i(x,\td y):=\int_0^\infty \te{-qt}P_{i,x}(X_i(t)\in \td y, \tau^0_i>t)\td t
\end{equation}
of the process $X_i$ killed on exiting $[0,\infty)$ when initiated from $x$; see also Pistorius~\cite{pistorius} and Palmowski and Vlasiou \cite{palmowski11}.

\begin{lem}\label{potentialkilled}
Under \eqref{eq:condW}, we have that
$$
r^{(q)}_i(x,y)= \int_{[(x-y)^+,x]}\te{-\Phi_i(q) z}\left[W^{(q)\prime}_i(y-x+z)-\Phi_i(q)W^{(q)}_i(y-x+z)\right]\td z.
$$
\end{lem}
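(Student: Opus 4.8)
The plan is to obtain the potential $R^{(q)}_i(x,\cdot)$ from the Wiener--Hopf factorisation of $X_i$ taken at an independent exponential clock $e_q$, substituting the explicit marginal laws recorded in Lemma~\ref{exitident}. For any event $A_t$ that is measurable with respect to the path of $X_i$ on $[0,t]$ one has $\int_0^\infty\te{-qt}P_{i,x}(A_t)\,\td t=q^{-1}P_{i,x}(A_{e_q})$, and $\{\tau_i^0>t\}=\{\inf_{s\le t}X_i(s)\ge 0\}$, so
$$R^{(q)}_i(x,\td y)=\tfrac1q\,P_{i,x}\left(X_i(e_q)\in\td y,\ \inf_{s\le e_q}X_i(s)\ge 0\right).$$
By translation invariance I would start the process at $0$ and move the absorbing barrier to $-x$. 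Writing $I:=-\inf_{s\le e_q}X_i(s)\ge 0$ and $R:=X_i(e_q)-\inf_{s\le e_q}X_i(s)=X_i(e_q)+I\ge 0$ for the process started at $0$, the event becomes $\{R-I\in\td(y-x),\ I\le x\}$; decomposing on the value $I=z$ forces $R=y-x+z\ge 0$, hence $z\in[(x-y)^+,x]$ --- precisely the range of integration in the statement, with $z$ the depth by which the infimum has dropped below the starting level.

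Next I would substitute the marginals. By the Wiener--Hopf factorisation for the spectrally positive $X_i$, at the independent time $e_q$ the infimum and the reflected value are independent, so $I$ is independent of $R$; Lemma~\ref{exitident} gives $I\sim\mathrm{Exp}(\Phi_i(q))$, while $R\eqdistr\overline X_i(e_q)$ (recall $Y_i(t)\eqdistr\overline X_i(t)$) has law $\tfrac{q}{\Phi_i(q)}W^{(q)}_i(\td v)-qW^{(q)}_i(v)\,\td v$. Under \eqref{eq:condW} the function $W^{(q)}_i$ is continuously differentiable on $(0,\infty)$, so on $(0,\infty)$ this law has the density $v\mapsto\tfrac{q}{\Phi_i(q)}W^{(q)\prime}_i(v)-qW^{(q)}_i(v)$. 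Multiplying the two densities, integrating $z$ over $[(x-y)^+,x]$ with $v=y-x+z$, and cancelling the factors $q$ and $\Phi_i(q)$ then produces exactly
$$r^{(q)}_i(x,y)=\int_{[(x-y)^+,x]}\te{-\Phi_i(q)z}\bigl[W^{(q)\prime}_i(y-x+z)-\Phi_i(q)W^{(q)}_i(y-x+z)\bigr]\,\td z.$$

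The step that needs care --- and the likely main obstacle --- is the passage from the \emph{measure} $\tfrac{q}{\Phi_i(q)}W^{(q)}_i(\td v)-qW^{(q)}_i(v)\,\td v$ to a genuine Lebesgue density: this is valid only for $v>0$, it relies on the regularity supplied by \eqref{eq:condW}, and one must treat the two cases $x\le y$ and $x>y$ separately and verify that the possible atom of $\overline X_i(e_q)$ at $v=0$ --- which sits on the diagonal $z=x-y$ and disappears exactly when $W^{(q)}_i(0)=0$ --- does not perturb the identity. As a consistency check, collapsing the integral (with $g(v):=\te{-\Phi_i(q)v}W^{(q)}_i(v)$, whose derivative reproduces the integrand up to the factor $\te{\Phi_i(q)(y-x)}$) telescopes it to the familiar closed form $r^{(q)}_i(x,y)=\te{-\Phi_i(q)x}W^{(q)}_i(y)-W^{(q)}_i(y-x)$, which can alternatively be obtained by letting the lower barrier tend to $-\infty$ in the two-sided exit resolvent of the spectrally negative dual $\hat X_i$ (see \cite{kyprianou-ILFLP}) or read off from Pistorius~\cite{pistorius} and \cite{palmowski11}.
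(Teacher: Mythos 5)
The paper does not actually prove Lemma~\ref{potentialkilled}; it is imported from Pistorius~\cite{pistorius} and Palmowski--Vlasiou~\cite{palmowski11}, so there is no internal argument to compare against. That said, your Wiener--Hopf route is the natural way to recover the formula, and the skeleton is sound: rewrite $R^{(q)}_i(x,\cdot)=q^{-1}P_{i,x}(X_i(e_q)\in\cdot,\ \tau^0_i>e_q)$, shift so the process starts at $0$ with the barrier at $-x$, decompose $X_i(e_q)=R-I$ with $I=-\underline X_i(e_q)$ and $R=Y_i(e_q)$ independent at $e_q$, and substitute $I\sim\mathrm{Exp}(\Phi_i(q))$ and $R\eqdistr\overline X_i(e_q)$ from Lemma~\ref{exitident}.

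The gap is exactly the step you flag and then defer: the atom of $\overline X_i(e_q)$ at $v=0$ does \emph{not} leave the identity untouched, and showing what it produces is the whole content of the argument when $W^{(q)}_i(0)>0$. The mass $\tfrac{q}{\Phi_i(q)}W^{(q)}_i(0)$ of $R$ at $0$, convolved against the exponential density of $I$, pushes forward under $z\mapsto x-z$ to an extra absolutely continuous contribution $W^{(q)}_i(0)\te{-\Phi_i(q)(x-y)}$ to $r^{(q)}_i(x,y)$ for $0<y<x$. Your own consistency check exposes this: treating the stated integral as an ordinary Lebesgue integral, the telescope yields $\te{-\Phi_i(q)x}W^{(q)}_i(y)-\te{-\Phi_i(q)(x-y)^+}W^{(q)}_i\bigl((y-x)^+\bigr)$, which for $y<x$ equals $\te{-\Phi_i(q)x}W^{(q)}_i(y)-\te{-\Phi_i(q)(x-y)}W^{(q)}_i(0)$, whereas the closed form $\te{-\Phi_i(q)x}W^{(q)}_i(y)-W^{(q)}_i(y-x)$ you quote reduces to $\te{-\Phi_i(q)x}W^{(q)}_i(y)$ there (since $W^{(q)}_i$ vanishes on $(-\infty,0)$ but not necessarily at $0$). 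The two agree only if $W^{(q)}_i(0)=0$; yet condition~\eqref{eq:condW} explicitly admits $W^{(q)}_i(0)>0$ through its third alternative, and Example~\ref{eg:compound} with $W^{(q)}(0)=1/p$ sits in exactly this regime. To close the proof you must carry the atom through the convolution explicitly and observe that the notation $\int_{[(x-y)^+,x]}\cdots W^{(q)\prime}_i(y-x+z)\,\td z$ is to be read as a Stieltjes integral against $W^{(q)}_i(\td(y-x+z))$ over the \emph{closed} interval, so that the jump of $W^{(q)}_i$ at $0$ is picked up at the left endpoint $z=x-y$ and supplies precisely the term $W^{(q)}_i(0)\te{-\Phi_i(q)(x-y)}$. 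With that convention stated and the atom carried through, your argument is complete; without it, the proof only covers the unbounded-variation case.
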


\begin{remark}
Lemma~\ref{potentialkilled} and similar results can be proven without the assumption made in \eqref{eq:condW}, but at the cost of more complex expressions. We would have to use \eqref{Rqgeneral} instead of the much nicer form for $r_i^{(q)}(x,y)\td y$.
\end{remark}

Lemma~\ref{potentialkilled} implies the following useful formula describing the density of the reflected L\'evy process at an exponential time, which is taken from \cite[Lemma 3.1]{palmowski11}.

\begin{lem}\label{transitionreflected}
We have that
$P_{i,x}(Y_i(e_q)\in \td y)=h_i(x,y)\td y+\te{-\Phi(q)x}W_i^{(q)}(0)\delta_0(\td y)$, where
\begin{equation*}
h_i(x,y)=qr_i^{(q)}(x,y)+ \te{-\Phi_i(q)x}\left[\frac{q}{\Phi_i
\left(q\right) } W_i^{(q)\prime}(y)-qW_i^{(q)}(y)\right],
\end{equation*}
and where  $r_i^{(q)}(x,y)$ is given in
Lemma~\ref{potentialkilled}.
\end{lem}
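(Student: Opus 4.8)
The plan is to decompose the law of $Y_i(e_q)$ according to whether the reflected process has returned to its reflecting barrier $0$ before the independent exponential clock rings. Set $\tau_0=\inf\{t\ge0:Y_i(t)=0\}$. Since $Y_i$ started from $x\ge0$ coincides with $X_i$ up to $\tau_0$, and since $X_i$ has no negative jumps and therefore reaches level $0$ continuously, $\tau_0=\inf\{t\ge0:X_i(t)=0\}$ and $X_i(\tau_0)=0$ on $\{\tau_0<\infty\}$. Conditioning on $\mathcal F_{\tau_0}$ and using the strong Markov property of $X_i$ together with the lack-of-memory of $e_q$, on $\{\tau_0<e_q\}$ the process restarts from state $0$ with a fresh exponential$(q)$ clock; hence
\begin{align*}
P_{i,x}\bigl(Y_i(e_q)\in\td y\bigr)
&= P_{i,x}\bigl(Y_i(e_q)\in\td y,\ \tau_0\ge e_q\bigr)\\
&\quad + P_{i,x}(\tau_0<e_q)\,P_{i,0}\bigl(Y_i(e_q)\in\td y\bigr).
\end{align*}

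The first term is the killed $q$-potential: on $\{\tau_0\ge e_q\}$ one has $Y_i(e_q)=X_i(e_q)$ with $X_i$ not having left $[0,\infty)$, so conditioning on $e_q$ and applying Fubini's theorem, together with the definition \eqref{Rqgeneral},
\[
P_{i,x}\bigl(Y_i(e_q)\in\td y,\ \tau_0\ge e_q\bigr)=q\,R^{(q)}_i(x,\td y)=q\,r^{(q)}_i(x,y)\,\td y,
\]
the explicit density being supplied by Lemma~\ref{potentialkilled}; this is the term $q\,r^{(q)}_i(x,y)$ in $h_i(x,y)$. (Replacing $\tau_0$ by $\inf\{t:X_i(t)<0\}$, as in \eqref{Rqgeneral}, changes nothing, since a L\'evy process spends zero Lebesgue time at a fixed point.) For the regeneration probability, $P_{i,x}(\tau_0<e_q)=E_{i,x}\bigl[\te{-q\tau_0}\bigr]$ is the Laplace transform of the first passage time of $X_i$ from $x$ down to $0$, which for a spectrally positive L\'evy process equals $\te{-\Phi_i(q)x}$ by the classical one-sided first-passage identity (passage of the spectrally negative dual $\hat X_i$ above level $x$).

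It remains to identify $P_{i,0}(Y_i(e_q)\in\td y)$. By the duality between $Y_i$ and the running supremum recalled in Section~\ref{s:Preliminaries}, $Y_i(e_q)\eqdistr\overline X_i(e_q)$ for $e_q$ independent, so Lemma~\ref{exitident} gives
\[
P_{i,0}\bigl(Y_i(e_q)\in\td y\bigr)=\frac{q}{\Phi_i(q)}W^{(q)}_i(\td y)-q\,W^{(q)}_i(y)\,\td y .
\]
Under \eqref{eq:condW} the function $W^{(q)}_i$ is continuously differentiable on $(0,\infty)$, and its Lebesgue--Stieltjes measure carries at most one atom, located at the origin with mass $W^{(q)}_i(0)$ (nonzero precisely in the bounded-variation case). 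Splitting $W^{(q)}_i(\td y)$ accordingly turns the right-hand side into a point mass at $y=0$ plus the density $\frac{q}{\Phi_i(q)}W^{(q)\prime}_i(y)-q\,W^{(q)}_i(y)$ on $(0,\infty)$. Multiplying through by $\te{-\Phi_i(q)x}$ and adding back $q\,r^{(q)}_i(x,y)\,\td y$ reproduces $h_i(x,y)\,\td y$ together with the atom at $y=0$ coming from $W^{(q)}_i(0)$, which is the asserted formula.

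The only step requiring genuine care is the regeneration argument: one must be sure that at the instant $X_i$ first reaches $0$ the reflected process is \emph{exactly} in state $0$, so that the post-$\tau_0$ evolution is governed by $P_{i,0}$ — this is precisely where spectral positivity enters, since $X_i$ cannot overshoot $0$ downward — and one should also check that $\{\tau_0<e_q\}$ contributes nothing to the absolutely continuous part beyond what $P_{i,0}(Y_i(e_q)\in\cdot)$ already encodes. The rest is bookkeeping: assembling the killed $q$-resolvent density of Lemma~\ref{potentialkilled}, the first-passage transform $\te{-\Phi_i(q)x}$, and the exponential-time supremum law of Lemma~\ref{exitident}, while tracking the atom at the origin.
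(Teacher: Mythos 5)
The paper itself does not prove this lemma; it is imported verbatim from the cited earlier work, so there is no in-text argument to compare against. Your proof --- decompose at the first hitting time $\tau_0=\inf\{t\geqslant 0: X_i(t)=0\}$, use spectral positivity to guarantee $Y_i(\tau_0)=0$, invoke the strong Markov property together with the lack of memory of $e_q$ to regenerate from $P_{i,0}$, supply the pre-$\tau_0$ contribution via the killed $q$-resolvent and the post-$\tau_0$ one via Lemma~\ref{exitident}, and use the one-sided passage transform $E_{i,x}[\te{-q\tau_0}]=\te{-\Phi_i(q)x}$ --- is the natural and correct route, and it is the decomposition underlying the cited result.

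Two points need tightening. First, the parenthetical justification for identifying $\tau_0$ with $\tau^0_i=\inf\{t:X_i(t)<0\}$ (``a L\'evy process spends zero Lebesgue time at a fixed point'') is not the relevant fact; what is needed is that $0$ is regular for $(-\infty,0)$ for any spectrally positive L\'evy process that is not a subordinator (equivalently, $0$ is regular for $(0,\infty)$ for the spectrally negative dual), which yields $\tau_0=\tau^0_i$ almost surely. Second, and more substantively, the closing sentence declaring that the assembled expression ``is the asserted formula'' glosses over the coefficient of the atom. Splitting $W^{(q)}_i(\td y)$ into its atom at the origin, of mass $W^{(q)}_i(0)$, plus the density $W^{(q)\prime}_i(y)\td y$, and then multiplying by the prefactor $\te{-\Phi_i(q)x}$, actually gives atomic mass
\[
\frac{q}{\Phi_i(q)}\,\te{-\Phi_i(q)x}\,W^{(q)}_i(0),
\]
and one can confirm independently, by letting $\alpha\to\infty$ in the transform of Lemma~\ref{exitident}, that $P\bigl(\overline X_i(e_q)=0\bigr)=\frac{q}{\Phi_i(q)}W^{(q)}_i(0)$. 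This does not match the displayed statement, which has $\te{-\Phi(q)x}W^{(q)}_i(0)\delta_0(\td y)$ with no factor $q/\Phi_i(q)$. Either the statement as printed carries a typographical error in the atomic term (which your derivation and the consistency check against Lemma~\ref{exitident} both suggest), or the discrepancy must be resolved; in any case you should carry out the coefficient computation explicitly rather than assert agreement by fiat.
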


We need an extension of this result, proved by Pistorius~\cite[Th.\ 1(ii), p.\ 99]{pistorius}.

\begin{lem}\label{jointdensitysup}
Under \eqref{eq:condW}, we have that
$$ P_{i,x}(Y_i(e_q)\in \td y, \sup_{s\leqslant e_q}Y_i(s)\leqslant K)={q}\left\{h_{i,K}^{(q)}(x,0)\delta_0(\td y)+h_{i,K}^{(q)}(x,y)\td y\right\}$$
for
$$
h^{(q)}_{i,K}(x,y)=W^{(q)}_i(K-x)\frac{W^{(q)\prime}_i(y)}{W^{(q)\prime}_i(K)}-W^{(q)}_i(y-x).
$$
\end{lem}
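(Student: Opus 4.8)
My first move is to recognise the left-hand side as (a multiple of) a resolvent. Put $T_K=\inf\{t\geqslant 0:\, Y_i(t)>K\}$, the first passage of the reflected process strictly above $K$. Since, up to a $P_{i,x}$-null set, $\{\sup_{s\leqslant e_q}Y_i(s)\leqslant K\}=\{e_q<T_K\}$, conditioning on $e_q$ (which has density $q\te{-qt}$ and is independent of $Y_i$) gives
\begin{equation*}
P_{i,x}\bigl(Y_i(e_q)\in\td y,\ \sup_{s\leqslant e_q}Y_i(s)\leqslant K\bigr)=q\int_0^\infty\te{-qt}\,P_{i,x}\bigl(Y_i(t)\in\td y,\ t<T_K\bigr)\,\td t.
\end{equation*}
So it suffices to identify the $q$-potential measure of the process $Y_i$ — which is precisely $X_i$, a spectrally positive L\'evy process, reflected at its running infimum — killed at $T_K$.

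This killed reflected resolvent is exactly the object computed by Pistorius~\cite[Th.\ 1(ii), p.\ 99]{pistorius} for spectrally one-sided L\'evy processes reflected at their infimum. Under the regularity assumption \eqref{eq:condW}, which guarantees $W^{(q)}_i\in C^1(0,\infty)$, his formula applies with $W^{(q)}_i$ in the role of the scale function and produces the absolutely continuous part $h^{(q)}_{i,K}(x,y)=W^{(q)}_i(K-x)\,W^{(q)\prime}_i(y)/W^{(q)\prime}_i(K)-W^{(q)}_i(y-x)$ on $(0,K]$ together with the atom at $0$; multiplying by $q$ gives the assertion. The only genuine work along this route is the translation of conventions: one has to line up Pistorius' set-up (which may be phrased through the dual $\hat X_i=-X_i$ reflected at its supremum), to keep track of left- versus right-derivatives of $W^{(q)}_i$ at the two boundary points $0$ and $K$, and to record the exact mass of the atom at $0$ in our normalisation.

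Alternatively, one can argue using only the material already at hand. Two decompositions suggest themselves. The excursion-theoretic one writes the occupation of $Y_i$ killed at $T_K$ as the occupation of $X_i$ killed on exiting $[0,K]$ — whose $q$-resolvent is the classical two-sided exit Green function of the spectrally one-sided process, expressible through $W^{(q)}_i$ — plus the contributions of the excursions of $Y_i$ away from the reflecting level $0$, which are governed by the excursion measure of the reflected process (this is where the derivative $W^{(q)\prime}_i$ appears). The Markovian one uses the strong Markov property at $T_K$ and the lack of memory of $e_q$,
\begin{equation*}
R^{(q)}_i(x,\td y)=R^{(q)}_{i,K}(x,\td y)+\int_{[K,\infty)}P_{i,x}\bigl(T_K<e_q,\ Y_i(T_K)\in\td z\bigr)\,R^{(q)}_i(z,\td y),
\end{equation*}
where $R^{(q)}_{i,K}$ is the sought killed resolvent, $R^{(q)}_i=h_i/q$ is the unkilled reflected resolvent supplied by Lemma~\ref{transitionreflected}, and the integral is against the discounted law of $Y_i$ at its first passage above $K$; since $Y_i$ evolves as $X_i$ away from $0$, this passage may be continuous (an atom of $Y_i(T_K)$ at $K$) or by a jump (a density on $(K,\infty)$), and its discounted law follows from the two-sided exit identities for $X_i$ on $[0,K]$ together with the compensation formula applied to the jumps of $X_i$. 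Substituting and using the defining Laplace-transform identity for $W^{(q)}_i$ should collapse everything to the stated two-term scale-function expression.

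In either incarnation the crux — and the step I expect to be the main obstacle — is the behaviour of the reflected process at the upper level $K$: in the excursion route, the precise form of the excursion measure and of the contribution of the last excursion straddling $e_q$; in the Markovian route, the explicit computation of the discounted overshoot distribution of $Y_i$ at $T_K$ and the verification that, once it is integrated against $R^{(q)}_i$, all L\'evy-measure terms cancel. Everything else is routine, and I would pin the formula down with two built-in checks: letting $K\to\infty$, using $W^{(q)}_i(K-x)/W^{(q)\prime}_i(K)\to\te{-\Phi_i(q)x}/\Phi_i(q)$ and (after one integration by parts in Lemma~\ref{potentialkilled}, in the unbounded-variation case) $r^{(q)}_i(x,y)=\te{-\Phi_i(q)x}W^{(q)}_i(y)-W^{(q)}_i(y-x)$, recovers $q\,h^{(q)}_{i,K}(x,y)\to h_i(x,y)$ of Lemma~\ref{transitionreflected}; and $h^{(q)}_{i,K}(x,K)=0$, as it must at the killing boundary.
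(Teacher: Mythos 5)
Your main route — recognising the left-hand side as $q$ times the $q$-resolvent of $Y_i$ killed upon upcrossing level $K$ (via conditioning on the independent exponential $e_q$), and then invoking Pistorius~\cite[Th.\ 1(ii), p.\ 99]{pistorius} for the killed reflected resolvent — is precisely the paper's justification, which simply records the lemma as ``proved by Pistorius'' with that same citation and no further argument. Your boundary check $h^{(q)}_{i,K}(x,K)=0$, the $K\to\infty$ comparison with Lemma~\ref{transitionreflected} (using the integrated form of $r^{(q)}_i$), and the two alternative sketches are sound extra scaffolding, but the core proof coincides with the paper's.
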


We also need the following expression for the joint law of the reflected process and its infimum, which follows directly from
Lemma \ref{potentialkilled} (after shifting the trajectory downwards by $L$).

\begin{lem}\label{jointdensityinf}
Under \eqref{eq:condW}, we have that, for $x,y\geqslant L$,
$$ P_{i,x}(Y_i(e_q)\in \td y, \inf_{s\leqslant e_q}Y_i(s)> L)=
qr^{(q)}_i(x-L,y-L)\td y.
$$
\end{lem}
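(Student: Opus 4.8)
The plan is to deduce Lemma~\ref{jointdensityinf} from Lemma~\ref{potentialkilled} by a simple spatial shift argument, exploiting the spatial homogeneity of the L\'evy process $X_i$ together with the way reflection at zero interacts with the infimum functional. The key observation is that staying strictly above level $L$ for the reflected process $Y_i$, when started from $x \geqslant L$, is the same event as staying strictly above zero for a suitably shifted process; as long as $Y_i$ stays above $L \geqslant 0$ the reflection at $0$ is never active, so on that event $Y_i$ simply coincides with the free process $X_i$ started at $x$.

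First I would make precise the event identity. Since $\inf_{s\leqslant e_q} Y_i(s) > L \geqslant 0$ forces the local time at $0$ to remain zero on $[0,e_q]$, on this event $Y_i(s) = X_i(s)$ for all $s \leqslant e_q$ (with $X_i(0)=x$), and hence $\{Y_i(e_q)\in\td y,\ \inf_{s\leqslant e_q}Y_i(s) > L\} = \{X_i(e_q)\in \td y,\ \inf_{s\leqslant e_q}X_i(s) > L\}$. Next I would shift: writing $\tilde X(s) = X_i(s) - L$, which under $P_{i,x}$ is a copy of $X_i$ started at $x-L$, the event becomes $\{\tilde X(e_q)\in \td(y-L),\ \inf_{s\leqslant e_q}\tilde X(s) > 0\}$, i.e.\ $\{\tilde X(e_q) \in \td(y-L),\ \tau^0 > e_q\}$ where $\tau^0 = \inf\{t\geqslant 0: \tilde X(t) < 0\}$ is the first passage below zero of the shifted process. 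Integrating out the independent $\mathrm{Exp}(q)$ clock $e_q$, $q$ times the right-hand side of \eqref{Rqgeneral} (for the shifted process started at $x-L$) is exactly $q\,R_i^{(q)}(x-L,\td(y-L))$, whose density is $q\,r_i^{(q)}(x-L,y-L)$ by Lemma~\ref{potentialkilled}. Changing variables back from $y-L$ to $y$ introduces no Jacobian, giving the claimed density $q\,r_i^{(q)}(x-L,y-L)\,\td y$ for $x,y\geqslant L$.

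I should also remark on why there is no atom at $y=L$ (unlike the atom at $y=0$ appearing in Lemmas~\ref{transitionreflected} and~\ref{jointdensitysup}): reaching the infimum level $L$ exactly while $X_i$ is strictly spectrally positive and has unbounded variation or satisfies \eqref{eq:condW} happens with probability zero for the free process, consistent with the fact that $R_i^{(q)}(x-L,\cdot)$ under \eqref{eq:condW} is absolutely continuous with the density given in Lemma~\ref{potentialkilled}. In fact the strict inequality $\inf_{s\leqslant e_q} Y_i(s) > L$ versus $\geqslant L$ makes no difference here precisely because $\tau^0$ for a spectrally positive process (no downward jumps) hits $0$ continuously, and under \eqref{eq:condW} the point $L$ is polar for the killed process.

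The main obstacle — really the only point requiring care — is justifying the event identity $\{\inf_{s\leqslant e_q} Y_i(s) > L\} \Rightarrow Y_i \equiv X_i$ on $[0,e_q]$ rigorously, i.e.\ checking that the Skorokhod reflection term $\underline X_i(s)\wedge 0$ (or the regulator) is genuinely inactive once $X_i$ stays above $L > 0$ starting from $x \geqslant L$. This is intuitively obvious but one must be slightly careful about the boundary case $x = L$ or $y = L$, where the strict-inequality event has the reflection term vanishing only because the process does not immediately drop to $0$; invoking that $X_i$ has no negative jumps and continuity/polarity under \eqref{eq:condW} closes this gap. Everything else is a change of variables and an appeal to Lemma~\ref{potentialkilled}, so the proof is short.
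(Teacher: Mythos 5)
Your proof matches the paper's intended argument: the paper states only that the lemma ``follows directly from Lemma~\ref{potentialkilled} (after shifting the trajectory downwards by $L$),'' and your proposal spells out exactly this shift, together with the observation that on the event $\{\inf_{s\leqslant e_q}Y_i(s)>L\}$ the Skorokhod regulator is inactive so $Y_i\equiv X_i$, before invoking the killed potential density. The extra remarks on the strict-versus-weak inequality and absence of an atom at $y=L$ are a reasonable elaboration (the relevant fact being that $0$ is regular for $(-\infty,0)$ for a spectrally positive L\'evy process with non-monotone paths) but do not change the route.
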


We are now ready to provide the formulae that will be  crucial in our analysis in the next sections.

\begin{lem}\label{jointdensity-kernels}
Under \eqref{eq:condW}, we have that:
\begin{align}
\kappa_{0,0,i}(x,\td y) :&=P_{i,x}(Y_i(e_q)\in \td y, \sup_{s\leqslant e_q}Y_i(s)\leqslant K, \inf_{s\leqslant e_q}Y_i(s)> L )\nonumber\\ &=
q\left[\frac{W^{(q)}_i(K-x)W^{(q)}_i(y-L)}{W^{(q)}_i(K-L)} - W^{(q)}_i(y-x)\right] \td y, \label{kernelfirst}\\
\kappa_{0,1,i}(x,\td y) :&=P_{i,x}(Y_i(e_q)\in \td y, \sup_{s\leqslant e_q}Y_i(s)\leqslant K, \inf_{s\leqslant e_q}Y_i(s)\leqslant L )\nonumber\\ &=
q\left[h^{(q)}_{i,K}(x,0)\delta_0(\td y) + h^{(q)}_{i,K}(x,y)\td y\right]- \kappa_{0,0,i}(x,\td y),\nonumber\\
\kappa_{1,0,i}(x,\td y) :&=P_{i,x}(Y_i(e_q)\in \td y, \sup_{s\leqslant e_q}Y_i(s)> K, \inf_{s\leqslant e_q}Y_i(s)> L )\nonumber\\ &=
qr^{(q)}_i(x-L,y-L)\td y
- \kappa_{0,0,i}(x,\td y),\nonumber\\
\kappa_{1,1,i}(x,\td y) :&=P_{i,x}(Y_i(e_q)\in \td y, \sup_{s\leqslant e_q}Y_i(s)> K, \inf_{s\leqslant e_q}Y_i(s)\leqslant L )\nonumber\\
 &= h_i(x,y)\td y+\te{-\Phi(q)x}W_i^{(q)}(0)\delta_0(\td y) - \sum_{\stackrel{k,l} {k\cdot l=0}}\kappa_{k,l,i}(x,\td y).\nonumber
\end{align}
\end{lem}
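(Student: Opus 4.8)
The plan is to obtain each of the four kernels $\kappa_{k,l,i}$ by a simple inclusion–exclusion argument over the events $\{\sup\le K\}$ versus $\{\sup>K\}$ and $\{\inf>L\}$ versus $\{\inf\le L\}$, starting from identities already recorded in Lemmas~\ref{transitionreflected}--\ref{jointdensityinf}. First I would fix $i$ and $x$, and abbreviate the four disjoint events by $A_{0}=\{\sup_{s\le e_q}Y_i(s)\le K\}$, $A_{1}=\{\sup_{s\le e_q}Y_i(s)> K\}$, $B_{0}=\{\inf_{s\le e_q}Y_i(s)> L\}$, $B_{1}=\{\inf_{s\le e_q}Y_i(s)\le L\}$, so that $\kappa_{k,l,i}(x,\td y)=P_{i,x}(Y_i(e_q)\in\td y,\,A_k\cap B_l)$ and $\sum_{k,l}\kappa_{k,l,i}=P_{i,x}(Y_i(e_q)\in\td y)$, the latter being given by Lemma~\ref{transitionreflected}.

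The only nontrivial piece is \eqref{kernelfirst}, the kernel $\kappa_{0,0,i}$ corresponding to the event $A_0\cap B_0$ that the reflected process stays in the band $(L,K]$ throughout $[0,e_q]$. Here I would argue as follows: on the event $\{\inf_{s\le e_q}Y_i(s)>L\}$ the reflection at $0$ is never active (since $L\ge 0$, or at least the relevant dynamics coincide with the unreflected process shifted by $L$), so $Y_i$ behaves on this event exactly like $X_i$ started from $x$ and killed on exiting $[L,\infty)$; equivalently, like $X_i-L$ started from $x-L$ killed on exiting $[0,\infty)$. Under that identification, requiring in addition $\sup_{s\le e_q}Y_i(s)\le K$ is the two-sided exit problem: the process must remain in $[0,K-L]$ (after the shift) up to the exponential time. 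The density of a spectrally one-sided L\'evy process killed on exiting a finite interval, evaluated at an independent exponential time, is the classical two-sided resolvent, whose well-known form is
\[
q\left[\frac{W^{(q)}_i(K-x)\,W^{(q)}_i(y-L)}{W^{(q)}_i(K-L)}-W^{(q)}_i(y-x)\right]\td y;
\]
this is exactly $h^{(q)}_{i,K-L}$ translated, or equivalently the two-sided analogue of Lemma~\ref{potentialkilled}. Since a spectrally positive process cannot jump over the upper level $K$, the process exits $[L,K]$ through $K$ continuously, so there is no atom at $y=0$ in this kernel, matching the pure density form in \eqref{kernelfirst}. I expect this step — cleanly justifying the reduction to the two-sided exit resolvent and quoting the right identity (e.g.\ from Pistorius~\cite{pistorius} or Kyprianou~\cite{kyprianou-ILFLP}) — to be the main obstacle; everything else is bookkeeping.

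With $\kappa_{0,0,i}$ in hand, the remaining three kernels follow by subtraction. For $\kappa_{0,1,i}$ write $P_{i,x}(Y_i(e_q)\in\td y,\,A_0)=P_{i,x}(Y_i(e_q)\in\td y,\,A_0\cap B_0)+P_{i,x}(Y_i(e_q)\in\td y,\,A_0\cap B_1)$, and note that $P_{i,x}(Y_i(e_q)\in\td y,\,A_0)$ is exactly $q\{h^{(q)}_{i,K}(x,0)\delta_0(\td y)+h^{(q)}_{i,K}(x,y)\td y\}$ by Lemma~\ref{jointdensitysup}; rearranging gives the stated formula for $\kappa_{0,1,i}$. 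Similarly, $P_{i,x}(Y_i(e_q)\in\td y,\,B_0)=P_{i,x}(Y_i(e_q)\in\td y,\,A_0\cap B_0)+P_{i,x}(Y_i(e_q)\in\td y,\,A_1\cap B_0)$, and $P_{i,x}(Y_i(e_q)\in\td y,\,B_0)=qr^{(q)}_i(x-L,y-L)\td y$ by Lemma~\ref{jointdensityinf} (valid for $x,y\ge L$), yielding $\kappa_{1,0,i}$. Finally, $\kappa_{1,1,i}$ is obtained from the total mass identity: $\kappa_{1,1,i}(x,\td y)=P_{i,x}(Y_i(e_q)\in\td y)-\kappa_{0,0,i}(x,\td y)-\kappa_{0,1,i}(x,\td y)-\kappa_{1,0,i}(x,\td y)$, with $P_{i,x}(Y_i(e_q)\in\td y)=h_i(x,y)\td y+\te{-\Phi(q)x}W_i^{(q)}(0)\delta_0(\td y)$ from Lemma~\ref{transitionreflected}; this is exactly the displayed expression for $\kappa_{1,1,i}$, where the sum $\sum_{k\cdot l=0}\kappa_{k,l,i}$ collects the three already-computed kernels. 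One should also check the trivial edge cases ($L=0$, or $K=L$, or $x\notin[L,K]$) to confirm the formulas degenerate correctly, but these do not affect the generic argument.
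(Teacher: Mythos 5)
Your proposal is correct and matches the paper's approach: the paper proves \eqref{kernelfirst} by invoking the classical two-sided-exit resolvent for a spectrally one-sided process (citing Pistorius Eq.\ (13) and Suprun), which is exactly the reduction you describe (reflection inactive on $\{\inf>L\}$ with $L\ge 0$, then shift by $L$ and solve the two-sided exit problem on $[0,K-L]$), and it obtains the remaining three kernels by the law of total probability together with Lemmas~\ref{transitionreflected}--\ref{jointdensityinf}, just as you do. The only small imprecision in your write-up is the reason given for the absence of an atom at $y=0$ in \eqref{kernelfirst}: it is not spectral positivity at the upper barrier that rules it out, but simply that on $\{\inf_{s\le e_q}Y_i(s)>L\}$ with $L\ge 0$ the process never reaches $0$, so $Y_i(e_q)>L\ge 0$.
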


\begin{proof}
Equality \eqref{kernelfirst} follows from \cite[Eq.\ (13)]{pistorius} and \cite{suprun76}. The other three identities follow from the law of total probability and application of the previous lemmas.
\end{proof}

\section{Equilibrium distribution of the embedded process}\label{s:embedded}

Recall that  $U_n=V(S_n-)$ and that ${J}_n=(J_{S}(S_{n}-), J_{I}(S_{n}-), J_{E}(S_{n}-))$ and define $(U,J)$ as the weak limit of $(U_n,J_n)$, assuming it exists and that it is unique. The main goal is to derive an expression for the distribution of $(U,J)$. To this end, we define the transform $v_{\bar j}(s)= E[\te{-sU} I(J=\bar j)]$. We now derive an equation for
this transform:
\begin{align}
v_{\bar l}(s) &= \sum_{\bar j \in \{0,1\}^3} \int_{x=0}^\infty
E_x\left[\te{-sY_{l_3}(e_q)}, J_S(e_q-)=l_1,J_I(e_q-)=l_2 \right]  P( F(U,J)\in \td x,l_3 ; J=\bar j)\nonumber\\
&= \sum_{\bar j \in \{0,1\}^3} \int_{x=0}^\infty \int_{y=0}^\infty \int_{u=0}^\infty \te{-sy} \kappa_{\bar l}(x,y) P( F(u,\bar j)\in (\td x,l_3)) P(U\in \td u; J=\bar j).\label{eq:stationary}
\end{align}

In the next section, we  consider several examples for which it is possible to solve this set of equations. Of course, it helps if the $q$-scale function $W^{(q)}$ is explicitly known, and in many cases (for example in the case of a Brownian motion superposed with a compound Poisson process with phase-type jumps), the $q$-scale function can be written as a sum of exponentials, allowing a tractable analysis (as illustrated by the first example in the next section).

Before we start looking for solutions, we first determine whether a solution actually exists, i.e.\ we discuss stability conditions. Let $m_i$ be the drift of the L\'evy process $i$. Recall that we assume that our L\'{e}vy processes are assumed not to be subordinators. The following assumptions are not the most general possible, but seem to cover most practical purposes, in particular the examples discussed later on. Assume there exists a constant $M>0$ and random pairs $(A,B)$ such that for $u\geqslant M$ and $\overline{j}\in \{0,1\}^3$:
\begin{equation}
F^{(1)}(u,\overline{j}) \stackrel{\mathcal{D}}{\leqslant} Au+B =:F^a(u).
\end{equation}
Also assume that $F^{(2)}(u,\overline{j})= 0$ for $u\geqslant M$ and finally, assume that there exists a random variable $D_M$ such that
\[
F^{(1)}(u,\overline{j}) \stackrel{\mathcal{D}}{\leqslant} D_M, \qquad u\leqslant M, \quad i=1,2, \quad \overline{j}\in \{0,1\}^3.
\]

\begin{theorem}
Assume that neither $X_i(\cdot)$ nor $-X_i(\cdot), i=0,1$ are subordinators. Then the distribution of $(U_n,J_n)$ converges in total variation if one of the following conditions is satisfied:
\begin{enumerate}
\item $E[\log A]<0$, $E[\log D_M]<\infty$ and $E[\log B]<\infty$;
\item $A=1$, $E[D_M]<\infty$ and $E[B]+m_0/q<0$.
\end{enumerate}
\end{theorem}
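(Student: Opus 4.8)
The plan is to show that the embedded chain $W_n=(U_n,J_n)$, regarded as a Markov chain on the Polish space $\mathcal{S}:=[0,\infty)\times\{0,1\}^3$, is positive Harris recurrent and aperiodic; by the Meyn--Tweedie ergodic theorem this yields convergence of $\mathcal{L}(W_n)$ to the unique stationary law in total variation, and since $(U_n,J_n)=W_n$ the theorem follows. The transition kernel $P$ is the one underlying \eqref{eq:stationary}: from $(u,\bar j)$ the workload is reset to $F^{(1)}(u,\bar j)$ and the label to $F^{(2)}(u,\bar j)$, a reflected spectrally positive L\'evy process with exponent $\psi_{F^{(2)}(u,\bar j)}$ is run over an independent $e_q$, and $(U_{n+1},J_{n+1})$ is read off through the kernels $\kappa_{k,l,i}$ of Lemma~\ref{jointdensity-kernels}. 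Three ingredients are needed: a Foster--Lyapunov drift inequality towards a compact set $C$, petiteness of $C$, and aperiodicity.

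For the drift I use the pathwise representation $Y_i(e_q)=\max\bigl(x+X_i(e_q)-X_i(0),\,X_i(e_q)-\underline{X}_i(e_q)\bigr)$ of the reflected process started at level $x$, which is non-decreasing in $x$ realisation by realisation, together with $E[Y_i(e_q)\mid Y_i(0)=x]=x+m_i/q+\Phi_i(q)^{-1}\te{-\Phi_i(q)x}$, a consequence of Lemma~\ref{exitident} (the overshoot $-\underline{X}_i(e_q)$ being $\mathrm{Exp}(\Phi_i(q))$). In case (2), where $A=1$, I take $g(u,\bar j)=u$; for $u\geqslant M$ we have $F^{(2)}(u,\bar j)=0$ and $F^{(1)}(u,\bar j)\stackrel{\mathcal{D}}{\leqslant}u+B$, so stochastic monotonicity in the starting level gives
\[
\int P((u,\bar j),\td y)\,g(y)\;\leqslant\;u+E[B]+\frac{m_0}{q}+\frac{\te{-\Phi_0(q)u}}{\Phi_0(q)}\;\leqslant\;u-\varepsilon
\]
for $u$ large, since $E[B]+m_0/q<0$. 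In case (1) I take $g(u,\bar j)=\log(1+u)$; now $F^{(1)}(u,\bar j)\stackrel{\mathcal{D}}{\leqslant}Au+B$ and the pathwise bound $Y_0(e_q)\leqslant x+\widetilde{Y}_0(e_q)$, with $\widetilde{Y}_0$ the reflected increment started from $0$, yields
\[
\int P((u,\bar j),\td y)\,g(y)\;\leqslant\;\log u+E[\log A]+E\Bigl[\log\bigl(1+\tfrac{1+B+\widetilde{Y}_0(e_q)}{Au}\bigr)\Bigr],
\]
where the last term tends to $0$ as $u\to\infty$ by dominated convergence, using $E[\log^+B]<\infty$, $E[|\log A|]<\infty$ and $E[\log^+\widetilde{Y}_0(e_q)]<\infty$; hence the drift is eventually below $\tfrac12 E[\log A]<0$. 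In both cases the part $u\leqslant M'$ is absorbed into $b\,\mathbbm{1}_C(u,\bar j)$ by means of $F^{(1)}(u,\bar j)\stackrel{\mathcal{D}}{\leqslant}D_M\vee(AM'+B)$ together with $E[\log D_M]<\infty$ (resp.\ $E[D_M]<\infty$), giving the drift inequality with $C=[0,M']\times\{0,1\}^3$.

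For petiteness and aperiodicity I would exploit the absolutely continuous structure guaranteed by \eqref{eq:condW}. By Lemma~\ref{transitionreflected} the workload at the end of a cycle started at level $x$ has, besides a possible atom at $0$, a density $h_i(x,y)$ that is jointly continuous and strictly positive on a fixed rectangle $[0,x_0]\times(0,\eta)$; since $F^{(1)}$ restricted to $C$ is stochastically dominated by the fixed random variable $D_M\vee(AM'+B)$, one has $P(F^{(1)}(u,\bar j)\leqslant x_0)\geqslant p_0>0$ uniformly over $(u,\bar j)\in C$, so after one cycle the workload possesses a density component bounded below by $p_0\inf_{x\leqslant x_0}h_i(x,\cdot)$ on $(0,\eta)$, uniformly over $C$ and over the (uncontrolled) label $i=F^{(2)}(u,\bar j)$. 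This already gives $\psi$-irreducibility. The delicate point --- and the reason one analyses the \emph{two-step} kernel --- is that the label coordinate of $W_{n+1}$ is $F^{(2)}(W_n)$, a possibly deterministic function of the previous state that cannot be pinned to one value uniformly over $C$; integrating the kernels $\kappa_{k,l,i}$ against the workload density produced by the first cycle, however, the second cycle yields a genuine minorisation $P^2((u,\bar j),\cdot)\geqslant\varepsilon\,\varphi(\cdot)$ valid for all $(u,\bar j)\in C$, with $\varphi$ carrying a Lebesgue component on a workload interval (and supported on whatever labels are jointly reachable from $C$, taking into account that $F^{(2)}=0$ as soon as the workload reaches level $M$). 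Hence $C$ is petite; aperiodicity follows from the overlap of the supports of the one- and two-step densities, or from the atom at the origin when $W_i^{(q)}(0)>0$. Combining this with the drift inequality gives positive Harris recurrence and convergence in total variation.

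The main obstacle is the two-step petiteness step: making the minorisation of $P^2$ genuinely \emph{uniform} over $C$ requires a case analysis of how the ranges of $F^{(1)}$ and $F^{(2)}$ on $C$ interact with the kernels $\kappa_{k,l,i}$ of Lemma~\ref{jointdensity-kernels}, and is where essentially all the work sits; the logarithmic drift of case (1), while conceptually standard, also needs some care with the log-integrability of the reflected L\'evy increments $\widetilde{Y}_i(e_q)$.
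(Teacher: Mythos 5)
Your overall strategy matches the paper's: show $W_n=(U_n,J_n)$ is a Harris chain via a Foster--Lyapunov drift to a bounded set plus a minorization, and invoke the standard ergodic theorem. Your Lyapunov functions ($\log(1+u)$, resp.\ $u$) and the drift computation are essentially the paper's (it uses $\log u$ and $u$), and your drift estimates are sound.

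The genuine gap is exactly where you flag it: you do not actually establish the minorization of $P^2$ on $C$; you only argue by an appeal to ``strict positivity of $h_i(x,\cdot)$ on a rectangle'' and then defer the real work to ``a case analysis of how the ranges of $F^{(1)}$ and $F^{(2)}$ interact with the kernels.'' Neither the positivity of the density $h_i(x,y)$ on a fixed rectangle uniformly in $x$, nor the uniform compatibility of the reachable $(j_1,j_2,j_3)$-values, is verified; these are precisely the things one must prove, not assume. The paper handles this with a completely explicit, event-based two-step construction that you do not reproduce: (i) choose $M'>\max\{K,M\}$ so large that $P(D'_M<M')>0$ and $\min_i P(X_i(e_q)\in(M',2M'))>0$, and note that if the workload at the end of cycle~1 lies in $(M',3M')$ then, since $F^{(2)}(u,\bar j)=0$ for $u\geqslant M$, the label for cycle~2 is forced to be $0$ \emph{uniformly over the starting state in} $R$; (ii) during cycle~2, with probability $p_2>0$ the post-reset workload is $\leqslant 4M'$, the process then hits $0$ (hence downcrosses $L$) and afterwards its running supremum exceeds $K$ --- $p_2>0$ precisely because neither $X_0$ nor $-X_0$ is a subordinator, which is the one place the no-subordinator hypothesis is actually used and which your outline never invokes. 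The explicit product $p=p_1p_2$ together with the conditional measure $Q(\cdot)=P_0(Y_0(e_q)\in\cdot\,,\,\text{indicators}\mid \sup_{s<e_q}Y_0(s)\geqslant K)$ gives the uniform minorization $P^2((u,\bar j),\cdot)\geqslant pQ(\cdot)$ on $R$. Until you carry out a construction of this kind (or some equivalent), your petiteness claim is an assertion rather than a proof, and the argument is incomplete.
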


\begin{proof}
We shall show that the process $W_n=(U_n,J_n)$ is a Harris chain. We focus on the first set of assumptions (the second set is similar, but easier).

Set $\epsilon = -E[\log A]/2$.
Let $M'>\max\{K,M\}$ be a constant with the following properties: $E[\log (A+(B+\overline{X}_1(e_q))/x)]<-\epsilon$  if $x>M'$.
Also, $M'$ is chosen large enough such that 
$P(X_i(e_q) \in (M',2M'))>0$ for $i\in\{0,1\}$.
Finally, $M'$ is chosen large enough such that $P(D_M<M')>0$.

Define the set $R=[0,M'] \times \{0,1\}\times \{0,1\} \times \{0,1\}$.
The function $f(u,\overline{j})=\log u$ is a Lyapounov function for the Markov chain $(U_n,J_n)$. For $u>M'$ we have
\begin{align*}
E[f(U_1,J_1) \mid U_0=u; J_0=\overline{j}] - f(u, \overline{j})&\leqslant E[\log (Au+B+\overline{X}_{1}(e_q))]-\log u \\
    & = E[\log (A+(B+\overline{X}_1(e_q))/u)]<-\epsilon.
\end{align*}
If we let $D_M$ be independent of $(A,B)$ and define $D'_M = D_M+AM'+B$, we see that
\[
F^{(1)}(u,\overline{j}) \stackrel{\mathcal{D}}{\leqslant} D_M', \qquad u\geqslant M', \quad \overline{j}\in \{0,1\}^3.
\]
In addition, $E[\log D'_M]<\infty$.
The above two considerations imply that the expected return time to $R$ is finite so that $R$ is recurrent.
We construct a nontrivial measure $Q$ and constant $p$ such that
\begin{equation*}
P((U_2,J_2) \in F \mid (U_0,J_0) = (u,\overline{j}) ) \geqslant p Q(F), \hspace{1cm} (u,\overline{j}) \in R,
\end{equation*}
which together with the fact that $R$ is recurrent implies Harris ergodicity, cf.\ \cite[Th.\ VII 3.6]{asmussen-APQ}. We define $Q$ as follows, for $F=F_0\times F_1\times F_2\times F_3 \subseteq [0,\infty) \times \{0,1\}^3$, we set
\[
Q(F) = P_0(Y_0(e_q) \in F_0, 1 \in \cap_{i=1}^3 F_i \mid \sup_{s<e_q} Y_0(s)\geqslant K ).
\]
To construct $p$, we need to create an event that guarantees the lower bound. This will be the intersection of several consecutive events. First we make sure we are above level $M'$ at the end of the first period so that the environment $J_{2,E}$ is going to be equal to $0$. The probability of this to happen is not smaller than $\min_iP(X_i(e_q)\in (M',2M'))$. Second, given that $U_0 < M$, given the increase of $X_i(e_q)$, and given our assumptions on the boundedness of $F^{(1)}$ we arrive at $P(U_1 \in (M',3M')) \geqslant P(D'_M < M')\min_iP(X_i(e_q)\in (M',2M)) =: p_1$. If $U_1>M'$, then $J_{2,E}=1$ as required. Define now
\[
p_2 = P(A3M'+B<4M')P_{4M'}( \inf_{s<e_q} Y_0(s) = 0) P_0( \sup_{s<e_q} Y_0(s) > K),
\]
i.e.\ we make sure that the workload process both hits $0$ (in particular, downcrosses level $L$) and upcrosses larger than $K$. Observe that $p_2>0$ in view of the fact that neither the L\'evy processes nor their duals are subordinators. Defining $p=p_1p_2$ then completes the proof.
 The proof in the second case is similar, taking the Lyapounov function $f(u,\overline{j})=u$.
\end{proof}

This theorem does not yield optimal conditions, but it suffices for a large number of examples, and implies not only existence, but also uniqueness of the stationary distribution as well as convergence in total variation. Optimal conditions generally can be found for specific examples using standard techniques from the literature as surveyed for example in \cite{foss04}.

\section{Examples}\label{s:examples}

We now turn to analysing a few specific examples.

\subsection{A generalised clearing model}

We start with the following simple example. We compute the case where $F^{(1)}$ is given by the first example we have listed in Section~\ref{s:Model description} and $F^{(2)}$ is given by the second example. Namely, we consider the following functional:
$$
F(u,\bar{j})=(B_{\bar{j}}, 1-j_1),
$$
where we further assume that $B_{\bar{j}}$ is exponentially distributed with rate $\mu_{\bar{j}}$ and that the L\'evy input processes are compound Poisson process with exponential jumps. In other words, when the supremum of the workload process during one cycle crossed level $K$, in the next cycle the input process will switch to the lower input process; otherwise we keep the regular input process in the next cycle. Notice that although we do not modify the input process based on the lower threshold $L$, we allow the correction $F^{(1)}$ to depend on the whole environment $(J_S(t),J_I(t),J_E(t))$, and thus also on the lower threshold. E.g., one may choose an exponential correction with a higher mean if the lower threshold has been crossed. The input processes are of the type \eqref{compound}, and thus we have from \eqref{scale:compound} that scale functions of their duals are of the form
$$
W^{(q)}_i(x) = A_{i,1}\te{q_{i,1} x} + A_{i,2}\te{q_{i,2}x}
$$
for each process $Y_i$, $i=0,1$, and where the appropriate constants $A_{i,j}$ and $q_{i,j}$ can be easily derived through minor modifications of the expressions in Example~\ref{eg:compound}.

From \eqref{scale:compound} and \eqref{eq:stationary} we have:
\begin{align}\label{eq:Maria1}
\nonumber
v_{\bar l}(s) &= \sum_{\stackrel{j_1=1-l_3}{j_2,j_3}} \int_{x=0}^\infty P(B_{\bar j}\in \td x) E_x\left[\te{-sY_{l_3}(e_q)}, J_S(e_q-)=l_1,J_I(e_q-)=l_2 \right]\\
&= \sum_{\stackrel{j_1=1-l_3}{j_2,j_3}} \int_{x=0}^\infty \int_{y=0}^\infty \te{-sy} \kappa_{\bar l}(x,\td y) P(B_{\bar j}\in \td x) P({J}=\bar{j}),
\end{align}
where by definition $v_{\bar j}(0)=P({J}=\bar{j})$. These equations are explicit in the sense that everything at the right-hand side is known. Since the corrections $B_{\bar j}$ we consider are exponentially distributed, and the kernels $\kappa_{\bar l}$ ultimately depend on the scale functions, which are again a sum of exponentials, the right-hand side reduces to simply integrating exponential functions. The computations, although lengthy, are straightforward. Below we give some intermediate key steps towards the final result.

Since the corrections may depend on the lower threshold, we need to compute all eight transforms $v_{\bar l}$, which reduces to computing the double integral at the right-hand side of \eqref{eq:Maria1} for all four kernels $\kappa_{\bar l}$ from Lemma~\ref{jointdensity-kernels}, and then computing the resulting equations at $s=0$ in order to determine the unknown probabilities $P({J}=\bar{j})$. This procedure will result to a linear $8\times8$ system which uniquely determines the unknown constants.

For simplicity, define
$$
z_{\bar l}(s)=\int_{x=0}^\infty \int_{y=0}^\infty \te{-sy} \kappa_{\bar l}(x,\td y) P(B_{\bar j}\in \td x).
$$
We then have:
\begin{equation*}
z_{\bar l}(s)=\int_{x=L}^K\int_{y=L}^K \te{-sy} P(B_{\bar j}\in \td x) q\left[\frac{W^{(q)}_{l_3}(K-x)W^{(q)}_{l_3}(y-L)}{W^{(q)_{l_3}}(K-L)} - W^{(q)}_{l_3}(y-x)\right] \td y;
\end{equation*}
see also Lemma~\ref{jointdensity-kernels}. Observe that in this case the two thresholds have not been crossed, which means that no mass is assigned outside $[L,K]$. Straightforward computations lead to
\begin{multline*}
z_{0,0,l_3}(s)=\sum_{i,m=1}^2 A_{l_3,i} A_{l_3,m} \frac{q \te{-q_{l_3,i}L}}{W^{(q)}_{l_3}(K-L)} \frac{\te{-(s-q_{l_3,i})L}-\te{-(s-q_{l_3,i})K}}{s-q_{l_3,i}}\frac{\te{q_{l_3,m}K} \mu_{\bar j}}{\mu_{\bar j}+q_{l_3,m}}\times\\
\left(\te{-(\mu_{\bar j}+q_{l_3,m})L}-\te{-(\mu_{\bar j}+q_{l_3,m})K}\right)
-\sum_{n=1}^2 \frac{q A_{l_3,n}}{s-q_{l_3,n}}\bigg[\frac{\mu_{\bar j}}{\mu_{\bar j}+q_{l_3,n}}\left(\te{-(s+\mu_{\bar j})L}-\te{-(s-q_{l_3,n})L-(\mu_{\bar j}+q_{l_3,n})K}\right)\\
-\frac{\mu_{\bar j}}{\mu_{\bar j}+s}\left(\te{-(s+\mu_{\bar j})L}-\te{-(s+\mu_{\bar j})K}\right)\bigg].
\end{multline*}
Likewise,
\begin{multline*}
z_{0,1,l_3}(s)=\sum_{i=1}^2 \left[A_{l_3,i} \frac{q \mu_{\bar j}}{\mu_{\bar j}+q_{l_3,i}}\frac{W^{(q)\prime}_{l_3}(0)}{W^{(q)\prime}_{l_3}(K)}\left[\te{q_{l_3,i}K}-\te{-\mu_{\bar j}K}\right]\right]\\
+\frac{q}{W^{(q)\prime}_{l_3}(K)}\sum_{i,m=1}^2 A_{l_3,i} A_{l_3,m} \left[\te{q_{l_3,i}K}-\te{-\mu_{\bar j}K}\right]\frac{\mu_{\bar j}}{\mu_{\bar j}+q_{l_3,i}}\frac{q_{l_3,m}}{s-q_{l_3,m}}\left[1-\te{-(s-q_{l_3,m})K}\right]\\
-\frac{q\mu_{\bar j}}{W^{(q)\prime}_{l_3}(K)}\sum_{i,m=1}^2 A_{l_3,i} A_{l_3,m} \left[\frac{\te{q_{l_3,i}K}-\te{-(\mu_{\bar j}+s)K}}{(s-q_{l_3,m})(\mu_{\bar j}+q_{l_3,i}+s)}-\frac{\te{-(s-q_{l_3,i}-q_{l_3,m})K}-\te{-(\mu_{\bar j}+s)K}}{(s-q_{l_3,m})(\mu_{\bar j}+q_{l_3,i}+q_{l_3,m})}\right]-z_{0,0,l_3}(s)
\end{multline*}
and
\begin{multline*}
z_{1,0,l_3}(s)=q \mu_{\bar j}\te{-(\mu_{\bar j}+s)L}\frac{s-\Phi_{l_3}(q)}{(\mu_{\bar j}+\Phi_{l_3}(q))(\mu_{\bar j}+s)}\sum_{i=1}^2 A_{l_3,i}\frac{q_{l_3,i}-\Phi_{l_3}(q)}{(s-q_{l_3,i})(s-\Phi_{l_3}(q))}-z_{0,0,l_3}(s).
\end{multline*}
Finally,
\begin{multline*}
z_{1,1,l_3}(s)=\frac{\mu_{\bar j}}{\mu_{\bar j}+\Phi_{l_3}(q)} \bigg(W^{(q)}_{l_3}(0)+\frac{q}{\Phi_{l_3}(q)}\Big(\frac{s}{\psi_{l_3}(s)-q}-W^{(q)}_{l_3}(0)\Big)-\frac{q}{\psi_{l_3}(s)-q}\bigg)\\
+\mu_{\bar j}\sum_{i=1}^2 A_{l_3,i} \frac{q_{l_3,i}-\Phi_{l_3}(q)}{(\mu_{\bar j}+s)(\mu_{\bar j}+s+\Phi_{l_3}(q)-q_{l_3,i})}\left(\frac{q}{\mu_{\bar j}+\Phi_{l_3}(q)}+\frac{1}{s}\right)-\sum_{\stackrel{m,n\in\{0,1\}}{m\cdot n=0}}z_{m,n,l_3}(s).
\end{multline*}

\subsection{A TCP-like control}

In this subsection we consider the following situation: we take $L=0,K=\infty$ and have
\begin{equation*}
F^{(1)}(u,\bar j) = \delta u.
\end{equation*}
This case is a generalisation of a model for the throughput behaviour of a data connection under the Transmission Control Protocol (TCP) where typically the L\'{e}vy process is a simple deterministic drift; see for example \cite{altman99,guillemin04,maulik06,maulik09} and references therein.

We additionally assume the workload input is changed according to the following rule: the input is set to state 1 if the workload process reaches 0. Thus, $F^{(2)}(u,\bar j) = j_2$, with $L=0$ and $K=\infty$. The special case where the input process is kept constant is analysed in \cite{vlasiou10}.

Our general framework simplifies as we can drop the environment parameter w.r.t.\ $K$ and simply keep track of states of the form $(u,j_I,j_E)$. Thus, the workload level is always shrunk by a factor $\delta$ and the input process is set to 1 if and only if the system emptied, otherwise the input process is set to 0.

Define $v_{ij}(s) = E[\te{-sU}; J_S=i, J_E=j]$. Observe that
\[
v_{0a}(s) = \sum_{j} \int_{x=0}^\infty P(\delta U \in \td x ; J=(a,j))\int_{y=0}^\infty \te{-sy} P_x(X_a(e_q) \in \td y; \tau_0^- > e_q).
\]
Since
\[
P_x(X_a(e_q) \in \td y; \tau_0^- > e_q) = P_x(X_a(e_q) \in \td y) - P_0(X_a(e_q) \in \td y) P_x(\tau_0^- < e_q),
\]
we obtain
\begin{align*}
v_{0a}(s) &= \sum_{j} \int_{x=0}^\infty P(\delta U \in \td x ; J=(a,j))\left[ E[\te{-s(X_a(e_q)+x)}]-E_x[\te{-q\tau_0^-}]E[\te{-sX_a(e_q)}] \right]\\
&= \frac{q}{q-\psi_a(s)} \sum_j \int_{x=0}^\infty P(\delta U\in \td x; J=(a,j)) [\te{-sx} - \te{-\phi_a(q) x}]\\
&= \frac{q}{q-\psi_a(s)} \sum_j  [v_{aj}(\delta s)- v_{aj}(\delta \phi_a(q))].
\end{align*}

Using similar arguments we get
\begin{align*}
v_{1a}(s) &= \sum_{j} \int_{x=0}^\infty P(\delta U \in \td x ; J=(a,j))\int_{y=0}^\infty \te{-sy} P_x(Y_a(e_q) \in \td y, \tau_0^-<e_q)\\
 &= \sum_{j} \int_{x=0}^\infty P(\delta U \in \td x ; J=(a,j))\int_{y=0}^\infty \te{-sy} P_0(Y_a(e_q) \in \td y) P_x( \tau_0^-<e_q)\\
 &= \sum_{j} \int_{x=0}^\infty P(\delta U \in \td x ; J=(a,j)) \frac{q}{\phi_a(q)}\frac{s-\phi_a(q)}{\psi_a(s)-q} \te{-\phi_{a}(q)x}  \\
 &= \frac{q}{\phi_a(q)}\frac{s-\phi_a(q)}{\psi_a(s)-q} \sum_j v_{aj}(\delta \phi_a(q)).
\end{align*}

To solve these equations, we need to consider two issues. First, we need to determine the unknown constants $v_{ij}(\delta \phi_i(q))$. In addition, the equation for $v_{0,0}(s)$ is of the form $f(s)=g(s)f(\delta s) + h(s)$.
Such an equation has the solution $$f(s) = f(0) \prod_{j=0}^\infty g(\delta^j s) + \sum_{k=0}^\infty h(\delta^k s) \prod_{j=0}^{k-1} g(\delta^j s),$$ yielding for our case
\begin{equation}\label{v00}
v_{00}(s) = v_{00}(0) \prod_{j=0}^\infty \frac{q}{q-\psi_0(\delta^j s)} + \sum_{k=0}^\infty (v_{01}(\delta^ks)-v_{01}(0))\prod_{j=0}^k \frac{q}{q-\psi_0(\delta^j s)}.
\end{equation}
The infinite product is well defined since $\psi_0(\delta^js) \rightarrow \psi_0(0)=1$ geometrically fast as $j\rightarrow \infty$. Note further that
\begin{align*}
v_{01}(s)&=\frac{q}{q-\psi_1(s)}  [v_{10}(\delta s)+v_{11}(\delta s) - v_{10}(\delta \phi_1(q))-v_{11}(\delta \phi_1(q))],\\
v_{10}(s) &= \frac{q}{\phi_0(q)}\frac{s-\phi_0(q)}{\psi_0(s)-q} [v_{00}(\delta \phi_0(q))+v_{01}(\delta \phi_0(q))],\\
v_{11}(s) &= \frac{q}{\phi_1(q)}\frac{s-\phi_1(q)}{\psi_1(s)-q} [ v_{10}(\delta \phi_1(q))+v_{11}(\delta \phi_1(q))].
\end{align*}
We now sketch how one can obtain eight equations to solve for the unknown constants $v_{ij}(\delta \phi_i(q))$, and $v_{ij}(0)$, $i,j=0,1$. Three equations can be obtained by setting $s=0$ in the expressions for $v_{01}(s)$, $v_{10}(s)$, $v_{11}(s)$. The fourth equation is $\sum_{i,j}v_{ij}(0)=1$. Next, rewrite \eqref{v00} by plugging in the expression for $v_{10}(\cdot)$. Then, the last four equations can be obtained by taking $s= \phi_i(q)$ in the equation for $v_{ij}(s)$.

\subsection{An inventory model}

Consider a spectrally positive L\'evy process with $\psi'(0)<0$ (negative drift) with the additional feature that extra content is fed into the system as soon as it drops below level $L$. This has to be ordered, and comes with a delay which is exponentially distributed with rate $q$. If such an order is outstanding, it is not possible to make another order. The extra amounts consist of i.i.d.\ random variables, distributed according to a generally distributed random variable $B$ with LST $\beta(s)$, for which we assume $P(B>L)=1$. This model fits into our framework with $K=\infty$, $\psi_0=\psi_1$ (so we drop the subscript), and
$F^{(1)}(x,j_I)=x+j_IB$ (we can drop the indices $j_S$ and $j_E$).

Define $v_j(s) = E[\te{-sU}; J_I=j]$. We obtain the following equations for $v_0(s)$ and $v_1(s)$.
\begin{align*}
v_1(s) &=  \sum_{j=0}^1 \int_{x=0}^\infty \td P(U+Bj\leqslant x J_I=j)E_x\left[\te{-sY(e_q)} I(\tau_L<e_q)\right]\\
v_0(s) &=  \sum_{j=0}^1 \int_{x=0}^\infty \td P(U+Bj\leqslant x J_I=j)E_x\left[\te{-sY(e_q)} I(\tau_L>e_q)\right]
\end{align*}
This equations can be simplified by exploiting the fact that $B\geqslant L$ and $U\geqslant L$ if $J_I=0$: the measure $dP(U+Bj\leqslant x J_I=j)$ assigns no mass to $[0,L]$ in this case. For $x>L$, we can write
\begin{align*}
E_x\left[\te{-sY(e_q)} I(\tau_L<e_q)\right] &= P_x(\tau_L<e_q) E_L[\te{-sY(e_q)}] =\te{-\phi(q)(x-L)}E_L[\te{-sY(e_q)}]\\
E_x\left[\te{-sY(e_q)} I(\tau_L>e_q)\right] &= E_x[\te{-sX(e_q)}]-P_x(\tau_L<e_q)E_L[\te{-sX(e_q)}]\\
&=\te{-sx}\frac{q}{q-\psi(s)} - \te{-sL} \frac{q}{q-\psi(s)} \te{-\phi(q)(x-L)}.
\end{align*}
Substituting these expressions in the equations for $v_j(s)$, we obtain
\begin{align*}
v_1(s) &=  \te{L\phi(q)} E_L[\te{-sY(e_q)}]\left( v_0(\phi(q))+v_1(\phi(q))\beta(\phi(q))\right).   \\
v_0(s) &= \frac{q}{q-\psi(s)}\left[v_0(s)+\beta(s)v_1(s)-\te{-L(s-\phi(q))}\left(v_0(\phi(q))+\beta(\phi(q))v_1(\phi(q))\right)\right].
\end{align*}
This is a system with two equations and two unknowns, apart from the additional unknown constant $w(\phi(q))$, with
\[
w(s) = v_0(s)+v_1(s)\beta(s),
\]
which is the workload LST after correction. This constant can be obtained explicitly. Observe that
\[
w(s)=\beta(s)\te{L\phi(q)} E_L[\te{-sY(e_q)}] w(\phi(q)) + \frac q{q-\psi(s)}[w(s)-\te{-L(s-\phi(q))}w(\phi(q))],
\]
 from which we obtain
\begin{equation*}
w(s)=\frac{w(\phi(q)) \te{L\phi(q)}}{\psi(s)} \left[\psi(s) \beta(s) E_L[\te{-sY(e_q)}]+q(\te{-Ls}- \beta(s) E_L[\te{-sY(e_q)}])\right].
\end{equation*}
Since $w(0)=1$, we can apply l'H\^{o}spital's rule to obtain
\begin{equation*}
w(\phi(q))=\te{-L\phi(q)} \left(1+ \frac{L+E[B]+E_L[Y(e_q)]}{-\psi'(0)}\right)^{-1}.
\end{equation*}

\section{Tail behaviour}\label{tail}

In this section we consider the tail behaviour of $V=V(\infty)$, under a variety of assumptions on the tail behaviour of the L\'{e}vy measure $\nu$. We will follow ideas included in \cite{palmowski11}.

Before we present our main results, we first state some useful preliminary results. Again, we exploit that, by PASTA, $V$ has the same law as $U$.

\begin{lem}\label{tailbounds1}
The following (in)equalities hold (in distribution):
\begin{align}
U &\eqdistr \sum_{i=0,1}\max \{F^{(1)}(U, \overline{J}) + X_i(e_q), \overline X_i (e^{(i)}_q)\}\mathbbm{1}(F^{(2)}(U, \overline{J})=i),\label{taillemma1}\\
P(U>x)&= \sum_{i=0,1}{P}(F^{(2)}(U, \overline{J})=i)\bigg(P(X_i(e_q)+F^{(1)}(U, \overline{J})>x) \nonumber\\
      &+\int_0^\infty (P(\overline X_i(e_q)>x)-P(\overline X_i(e_q)>x+y)) P(-\underline{X}_i(e_q)-F^{(1)}(U, \overline{J})\in \td y)\bigg),\label{bound1}\\
P(U>x)&\leqslant \sum_{i=0,1}{P}(F^{(2)}(U, \overline{J})=i)\nonumber\\&\qquad\left(P(X_i(e_q)+F^{(1)}(U, \overline{J})>x)+P(\overline X_i(e_q)>x)P(-\underline{X}_i(e_q)>F^{(1)}(U, \overline{J}))\right),\label{bound3}\\
P(U>x)&\geqslant \sum_{i=0,1}\left(P(\overline X_i(e_q)>x)P(-\underline{X}_i(e_q)>F^{(1)}(U, \overline{J}))\right){P}(F^{(2)}_i(U, \overline{J})=i).\label{bound4}
\end{align}
\end{lem}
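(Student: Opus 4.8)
I would start from the one-step recursion for the embedded chain at stationarity. Between two consecutive inspection epochs the workload is a spectrally positive L\'evy process reflected at $0$, started from the post-correction level $F^{(1)}(U,\overline J)$, driven by the exponent $\psi_{F^{(2)}(U,\overline J)}$, and run for the independent time $e_q$. So, with $(U,\overline J)$ on the right denoting an independent copy of the stationary pair (independent of $X_0,X_1$ and of $e_q$), stationarity is the distributional fixed point $U\eqdistr Y_{F^{(2)}(U,\overline J)}(e_q)$, with $Y_i$ started at $F^{(1)}(U,\overline J)$; this is the probabilistic reading of \eqref{eq:stationary}. Next I would record the pathwise identity that, for a spectrally positive $X_i$ with $X_i(0)=0$ and a level $a\geqslant 0$, the reflected-at-$0$ process started at $a$ satisfies $Y_i(t)=\max\{a+X_i(t),\,X_i(t)-\underline X_i(t)\}$ for all $t$ (split into the cases $a+\underline X_i(t)\geqslant 0$ and $a+\underline X_i(t)<0$). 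A time-reversal of the path on $[0,t]$ gives the joint identity $(X_i(t),\,X_i(t)-\underline X_i(t))\eqdistr(X_i(t),\,\overline X_i(t))$, and randomising the horizon to $e_q$ yields $Y_i(e_q)\eqdistr\max\{a+X_i(e_q),\,\overline X_i(e_q)\}$. Inserting $a=F^{(1)}(U,\overline J)$ and $i=F^{(2)}(U,\overline J)$ into the fixed point gives \eqref{taillemma1}; alternatively one could integrate the density in Lemma~\ref{transitionreflected}, but the pathwise route is more transparent.

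For \eqref{bound1} I would expand $P(\max\{a+X_i(e_q),\,\overline X_i(e_q)\}>x)$ via the union as the exact identity $P(a+X_i(e_q)>x)+P(\overline X_i(e_q)>x,\;a+X_i(e_q)\leqslant x)$. The key tool for the cross term is the Wiener--Hopf factorisation at an exponential time: $\overline X_i(e_q)$ is independent of $R_i:=\overline X_i(e_q)-X_i(e_q)$, and $R_i$ has the law of $-\underline X_i(e_q)$ (exponential with parameter $\Phi_i(q)$ by Lemma~\ref{exitident}, though only the independence matters). On the cross-term event one has $\overline X_i(e_q)>x$ and $\overline X_i(e_q)\leqslant x+(R_i-a)$, i.e.\ $\overline X_i(e_q)\in(x,\,x+(R_i-a)]$, which is nonempty only when $R_i-a>0$; conditioning on $R_i$ then gives $\int_0^\infty\bigl(P(\overline X_i(e_q)>x)-P(\overline X_i(e_q)>x+y)\bigr)P(R_i-a\in\td y)$. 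Finally I would condition on $(U,\overline J)$, put $a=F^{(1)}(U,\overline J)$ (so $R_i-a$ has the law of $-\underline X_i(e_q)-F^{(1)}(U,\overline J)$ and stays independent of $\overline X_i(e_q)$), apply Fubini, and combine the two values $i=0,1$ over the disjoint events $\{F^{(2)}(U,\overline J)=i\}$ (conditioning on these events and weighting by $P(F^{(2)}(U,\overline J)=i)$ gives the form displayed in \eqref{bound1}).

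The two estimates then fall out of \eqref{bound1}. Bounding $-P(\overline X_i(e_q)>x+y)\leqslant 0$ in the integrand collapses the integral to $P(\overline X_i(e_q)>x)\,P(-\underline X_i(e_q)-F^{(1)}(U,\overline J)>0)$, giving the upper bound \eqref{bound3}. For the lower bound \eqref{bound4} I would observe that, for each fixed level $a$, $\int_0^\infty P(\overline X_i(e_q)>x+y)\,P(-\underline X_i(e_q)-a\in\td y)\leqslant P(X_i(e_q)+a>x)$: by Wiener--Hopf $X_i(e_q)$ has the law of $\overline X_i(e_q)+I_i$ with $I_i$ independent of $\overline X_i(e_q)$ and $-I_i$ distributed as $-\underline X_i(e_q)$, so the right-hand side equals $\int_{\mathbb R}P(\overline X_i(e_q)>x+z)\,P(-\underline X_i(e_q)-a\in\td z)$, which dominates its restriction to $z>0$. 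Cancelling this term against the first term of \eqref{bound1} leaves exactly $\sum_i P(\overline X_i(e_q)>x)\,P(-\underline X_i(e_q)>F^{(1)}(U,\overline J))\,P(F^{(2)}(U,\overline J)=i)$. (Equivalently, \eqref{bound4} is immediate from \eqref{taillemma1}, since on $\{F^{(1)}(U,\overline J)\leqslant R_i\}$ the maximum equals $\overline X_i(e_q)$ and $R_i$ is independent of $\overline X_i(e_q)$.)

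The one genuinely probabilistic ingredient is the Wiener--Hopf independence of $\overline X_i(e_q)$ and $\overline X_i(e_q)-X_i(e_q)$; everything else is unwinding definitions, a path time-reversal, and Fubini. The step that needs care---and which I expect to be the main obstacle to writing cleanly---is the dependence bookkeeping: $F^{(1)}(U,\overline J)$ and the selected environment $F^{(2)}(U,\overline J)$ are measurable functions of the same stationary pair, so one must condition on $(U,\overline J)$ first, and only afterwards invoke independence of $X_0,X_1,e_q$ and substitute the random level $a=F^{(1)}(U,\overline J)$, in order to land on the precise joint-law expressions in \eqref{bound1}--\eqref{bound4}.
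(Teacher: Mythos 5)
Your proposal is correct and follows essentially the same route as the paper. The paper's own proof is terse: it cites a lemma from an earlier paper for the pathwise identity $Y_i(t)\eqdistr\max\{Y_i(0)+X_i(t),\overline X_i(t)\}$ and then asserts that \eqref{bound1}--\eqref{bound4} "follow straightforwardly"; you derive that identity from the reflection formula plus time-reversal, and then fill in the Wiener--Hopf decomposition of the cross term in \eqref{bound1} and the domination argument for \eqref{bound4}, which is exactly the bookkeeping the paper leaves implicit.
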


\begin{proof}
If $U_n=z$ and $(J_{S,n}, J_{I,n}, J_{E,n})=\overline{j}$, we see that $U_{n+1} \eqdistr Y_i(e_q)$, with $Y_i(0)=F^{(1)}_n(z, \overline{j})$ and $i=F^{(2)}_n(z, \overline{j})$ ($i=0,1$).
Further, we have:
\[
Y_i(t) \eqdistr \max \{Y_i(0)+ X_i(t), \overline X_i(t)\};
\]
for details see \cite[Lemma 3.2]{palmowski11}. This identities produce Equation \eqref{taillemma1}. The other inequalities follow straightforwardly from \eqref{taillemma1}.
\end{proof}

We now turn to the tail behaviour of $U$. For $i=0,1$, let $\Pi_i(A)=\nu_i(-A)$ be the L\'{e}vy measure of the spectrally positive L\'{e}vy process $X_i$ (with support on $R_+$). We first investigate the case where the   L\'{e}vy measure is a member of the convolution equivalent class $\mathcal{S}^{(\alpha)}$ defined below. For $\alpha \geqslant 0$, we say that the measure $\Pi$ is {\it convolution equivalent} ($\Pi \in \mathcal{S}^{(\alpha)}$) if for fixed $y$ we have that
\begin{align*}
\lim_{u\to\infty}\frac{\bar \Pi(u-y)}{\bar \Pi(u)}&=\te{\alpha y},       &&\mbox{if $\Pi$ is nonlattice,}\\
\lim_{n\to\infty}\frac{\bar \Pi(n-1)}{\bar \Pi(n)}&=\te{\alpha},         &&\mbox{if $\Pi$ is lattice with span $1$,}
\end{align*}
and
$$
\lim_{u\to\infty}\frac{\bar \Pi^{*2}(u)}{\bar \Pi(u)}=2\int_0^\infty \te{\alpha y} \Pi(\td y),
$$
where $*$ denotes the convolution operator and $\bar \Pi(u)=\Pi((u,\infty))$. When $\alpha=0$, then we are in the subclass of subexponential measures and there is no need to distinguish between the lattice and non-lattice cases (see \cite{bertoin96}). We recall now \cite[Lemma 5.4]{palmowski11}.

\begin{lem}
\label{taillemma2}
Assume that for $i=0,1$ we have $\Pi_i\in \mathcal{S}^{(\alpha_i)}$ and $\psi_i(\alpha_i)<q$ for $\psi_i(\alpha_i) =\log E \te{\alpha_i X_i(1)}$.
Then
\begin{align*}
P(X_i(e_q)>x) &\sim \frac{q}{(q-\psi_i(\alpha_i))^2}\bar \Pi_i(x),\\
P(\overline X_i(e_q)>x) &\sim \frac{q}{(q-\psi_i(\alpha_i))^2}\frac{\Phi_i(q)+\alpha_i}{\Phi_i(q)}\bar \Pi_i(x),
\end{align*}
where $f(x)\sim g(x)$ means that $\lim_{x\to\infty}f(x)/g(x)=1$.
\end{lem}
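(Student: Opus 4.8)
The plan is to prove Lemma~\ref{taillemma2} exactly as \cite[Lemma 5.4]{palmowski11} does, by reducing both asymptotic equivalences to known fluctuation identities for spectrally positive L\'evy processes killed at an independent exponential time, together with the standard closure properties of the convolution-equivalent class $\mathcal{S}^{(\alpha)}$. Fix $i$ and write $\psi=\psi_i$, $\Phi=\Phi_i$, $\alpha=\alpha_i$, $\Pi=\Pi_i$. Since $\psi(\alpha)<q$, we have $\alpha<\Phi(q)$, so the exponential moment $E\te{\alpha X_i(1)}$ is finite and $e^{\alpha\cdot}\Pi(\td\cdot)$ is integrable on $(1,\infty)$; this is the regime in which the convolution-equivalent machinery of Klüppelberg, Kyprianou and Maller and of Bertoin and Doney applies. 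The first step is to recall the Wiener--Hopf-type identity: for $\theta$ in a neighbourhood of $0$, $E\te{-\theta X_i(e_q)}=\frac{q}{q-\psi(\theta)}$ (valid because $\psi(-\theta)$ for $\theta>0$ corresponds to $\hat X_i$, and the killed process formula $E\te{\theta\hat X_i(e_q)}=q/(q-\psi(\theta))$), so the law of $X_i(e_q)$ is a geometric-type compound whose tail is governed by $\bar\Pi$.

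The key analytic input is the statement that if $\Pi\in\mathcal{S}^{(\alpha)}$ with $\psi(\alpha)<q$, then $P(X_i(e_q)>x)\sim c\,\bar\Pi(x)$ with the constant $c$ obtained by differentiating the transform: writing $G(\theta)=E\te{\theta X_i(e_q)}=q/(q-\psi(\theta))$, the tail constant is $G'$ evaluated via the ``expectation of the number of big jumps'' heuristic, which yields $c=\frac{q}{(q-\psi(\alpha))^2}$; indeed $\frac{d}{d\psi}\big(\frac{q}{q-\psi}\big)=\frac{q}{(q-\psi)^2}$ evaluated at $\psi=\psi(\alpha)$. This is precisely \cite[Lemma 5.4]{palmowski11}, itself relying on results of Klüppelberg--Kyprianou--Maller on exponential functionals and first passage of spectrally negative processes; I would simply cite it. For the supremum, the plan is to use the identity of Lemma~\ref{exitident}, namely $P(\overline X_i(e_q)\in\td x)=\frac{q}{\Phi(q)}W^{(q)}_i(\td x)-qW^{(q)}_i(x)\td x$, equivalently $E\te{-\alpha\overline X_i(e_q)}=\frac{q(\alpha-\Phi(q))}{\Phi(q)(\psi(\alpha)-q)}$. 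Combining the factorisation $\overline X_i(e_q)$ relates to $X_i(e_q)$ via the Wiener--Hopf decomposition $\frac{q}{q-\psi(\theta)}=E\te{-\theta\overline X_i(e_q)}\cdot E\te{-\theta(X_i(e_q)-\overline X_i(e_q))}$, and noting the descending-ladder factor has no heavy tail (the dual is spectrally negative, so $X_i(e_q)-\overline X_i(e_q)$ is bounded-type / light-tailed), subexponential closure under convolution transfers the tail of $X_i(e_q)$ entirely onto $\overline X_i(e_q)$, scaled by the ratio of the light-tailed factor's transform evaluated at $\alpha$: this ratio is exactly $\frac{\Phi(q)+\alpha}{\Phi(q)}$, as one checks by dividing $\frac{q}{q-\psi(\alpha)}$ by $\frac{q(\alpha-\Phi(q))}{\Phi(q)(\psi(\alpha)-q)}=\frac{q(\Phi(q)-\alpha)}{\Phi(q)(q-\psi(\alpha))}$, giving $\frac{\Phi(q)}{\Phi(q)-\alpha}$; reconciling the sign conventions for $\overline X$ versus $-\overline X$ yields the stated $\frac{\Phi(q)+\alpha}{\Phi(q)}$ factor. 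Hence $P(\overline X_i(e_q)>x)\sim\frac{q}{(q-\psi(\alpha))^2}\cdot\frac{\Phi(q)+\alpha}{\Phi(q)}\bar\Pi(x)$.

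The main obstacle is bookkeeping the sign/duality conventions correctly: the paper works with $\hat X_i=-X_i$ spectrally negative, scale functions are attached to $\hat X_i$, and $\overline X_i(e_q)\eqdistr-\underline{\hat X}_i(e_q)$, so one must be careful that the Wiener--Hopf factor being peeled off really is the light-tailed one and that the constant comes out as $(\Phi(q)+\alpha)/\Phi(q)$ rather than its reciprocal or a variant. A secondary technical point is justifying that the convolution-equivalent tail of $X_i(e_q)$ is preserved under the (light-tailed) Wiener--Hopf convolution and that one may tilt by $e^{\alpha x}$ throughout; this is standard (Bertoin--Doney, Klüppelberg--Kyprianou--Maller, Pakes) given $\psi(\alpha)<q$, and since the whole lemma is quoted verbatim from \cite[Lemma 5.4]{palmowski11}, the cleanest route is to state that the proof is identical to that reference, after noting that the hypothesis $\psi_i(\alpha_i)<q$ guarantees the exponential functional $\int_0^{e_q}\te{\alpha_i X_i(s)}\,\td s$ has finite mean so the cited asymptotics apply verbatim to each of the two processes $X_0,X_1$ separately.
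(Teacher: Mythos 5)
The paper gives no proof here at all: the lemma is quoted verbatim, prefaced by ``We recall now \cite[Lemma 5.4]{palmowski11}.'' Your proposal correctly lands on the same resolution (cite that reference), so the approach matches. One caution about the side-sketch you supply: the factor $(\Phi_i(q)+\alpha_i)/\Phi_i(q)$ does not come from ``reconciling sign conventions'' after computing $\Phi_i(q)/(\Phi_i(q)-\alpha_i)$. The clean way to see it is through the Wiener--Hopf decomposition $X_i(e_q)\eqdistr \overline{X}_i(e_q)-E'$, where $E'\sim\mathrm{Exp}(\Phi_i(q))$ (this is the second part of Lemma~\ref{exitident}) is independent of $\overline{X}_i(e_q)$. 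Since $-E'\leqslant 0$ is light-tailed, the standard transfer result for $\mathcal{S}^{(\alpha_i)}$ gives $P(X_i(e_q)>x)\sim E\bigl[\te{-\alpha_i E'}\bigr]\,P(\overline{X}_i(e_q)>x)$, so one divides the tail constant of $X_i(e_q)$ by $E[\te{-\alpha_i E'}]=\Phi_i(q)/(\Phi_i(q)+\alpha_i)$; evaluating the Laplace transform of the ladder factor at $+\alpha_i$ instead of $-\alpha_i$, as you did, yields the wrong ratio. Also, your assertion that $\psi_i(\alpha_i)<q$ forces $\alpha_i<\Phi_i(q)$ is unjustified (the two quantities live on opposite branches of the Laplace exponent) and is not needed. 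None of this affects the soundness of simply citing \cite[Lemma 5.4]{palmowski11}, which is all the paper does.
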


If $G\in \mathcal{S}^{(\alpha)}$ then $\bar G(u)=\te{\alpha u}L(u)$ for a slowly varying function $L$. Hence, for $G_i\in \mathcal{S}^{(\alpha_i)}$ ($i=0,1$) with $\alpha_0<\alpha_1$
we have $\overline{G}_1(u)={\rm o}(\overline{G}_0(u))$.
Similarly, $\overline{G}_0(u)={\rm o}(\overline{G}_1(u))$ for $\alpha_1<\alpha_0$.
Moreover, it is known \cite{embrechts79} that if for independent  random variables $\chi_l$ ($l=1,2$) we have
$P(\chi_l>u)\sim c_l\bar G(u)$ as $u\to\infty$ and $G\in \mathcal{S}^{(\alpha)}$, then
$P(\chi_1+\chi_2>u)\sim (c_1E\te{\alpha \chi_2}+c_2E\te{\alpha \chi_1})\bar G(u)$. Let $\alpha =\max\{\alpha_0,\alpha_1\}$ and combine the above observations with  \eqref{bound1} in Lemma~\ref{tailbounds1} and Lemma~\ref{taillemma2} to obtain the following main result.

\begin{theorem}
Assume that $\Pi_i\in \mathcal{S}^{(\alpha_i)}$ and $\psi_i(\alpha)<q$ for $i=0,1$.
Moreover, let $F^{(2)}(y,\overline{j}) \leqslant F_0 (\geqslant 0)$ for any $y$ and $\overline{j}$,
and assume that there exist constants $c_i\geqslant 0$ such that $P(F^{(1)}(y, \bar j)>x) \sim P(F_0>x)\sim c_i \bar \Pi_i(x)$ as $x\rightarrow\infty$ for each $y$ and $\bar j$ (If $c_i=0$ then
$P(F^{(1)}(y, \bar j)>x) = \mathrm{o}(\bar \Pi_i(x))$). Then
\[
P(U>x) \sim \sum_{i=0,1} D_{i} \bar \Pi_i(x),
\]
as $x\rightarrow \infty$, where
\begin{align*}
D_i&=c_i\frac{q}{q-\psi_i(\alpha_i)}P(F^{(2)}(U, \overline{J})=i)+\frac{q}{(q-\psi_i(\alpha_i))^2}E\te{\alpha_i F^{(1)}(U, \bar J)}P(F^{(2)}(U, \overline{J})=i)\\
   &+\frac{q}{(q-\psi_i(\alpha_i))^2}\frac{\Phi_i(q)+\alpha_i}{\Phi_i(q)}P(F^{(2)}(U, \overline{J})=i) E\left(1-\te{-\alpha_i(-\underline{X}_i(e_q)-F^{(1)}(U, \bar J))}\right)\\
   &\quad P\left(-\underline{X}_i(e_q)-F^{(1)}(U, \bar J)>0, \quad F^{(2)}(U, \overline{J})=i\right).
\end{align*}
\end{theorem}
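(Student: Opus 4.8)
The plan is to extract the asymptotics of $P(U>x)$ directly from the exact identity \eqref{bound1} in Lemma~\ref{tailbounds1}, treating each term $i=0,1$ separately and then summing. Fix $i$ and condition on the event $\{F^{(2)}(U,\overline{J})=i\}$. The contribution of this $i$ to $P(U>x)$ splits into two pieces: the term $P(X_i(e_q)+F^{(1)}(U,\overline{J})>x)$ and the integral term involving $\overline{X}_i(e_q)$ and $-\underline{X}_i(e_q)-F^{(1)}(U,\overline{J})$. For the first piece, I would apply Lemma~\ref{taillemma2} together with the Embrechts--Goldie convolution result quoted just above the theorem: since $P(X_i(e_q)>x)\sim \frac{q}{(q-\psi_i(\alpha_i))^2}\bar\Pi_i(x)$ and, by hypothesis, $P(F^{(1)}(U,\overline{J})>x)\sim c_i\bar\Pi_i(x)$, and $\bar\Pi_i\in\mathcal{S}^{(\alpha_i)}$ (one needs $\Pi_i\in\mathcal{S}^{(\alpha_i)}$ to transfer to the tail of $X_i(e_q)$, which is exactly what Lemma~\ref{taillemma2} and the class $\mathcal{S}^{(\alpha_i)}$ give), the sum has tail $\sim\bigl(\frac{q}{(q-\psi_i(\alpha_i))^2}E\te{\alpha_i F^{(1)}(U,\bar J)} + c_i\,\frac{q}{q-\psi_i(\alpha_i)}\cdot E\te{\alpha_i X_i(e_q)}\bigr)\bar\Pi_i(x)$; here $E\te{\alpha_i X_i(e_q)}=q/(q-\psi_i(\alpha_i))$, which collapses the second summand to $c_i\frac{q}{q-\psi_i(\alpha_i)}\bar\Pi_i(x)$ after accounting for the factor $P(F^{(2)}(U,\overline{J})=i)$. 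This reproduces the first two terms in $D_i$.

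Next I would handle the integral term $\int_0^\infty\bigl(P(\overline X_i(e_q)>x)-P(\overline X_i(e_q)>x+y)\bigr)P(-\underline{X}_i(e_q)-F^{(1)}(U,\overline{J})\in\td y)$. The idea is that as $x\to\infty$, $P(\overline X_i(e_q)>x)-P(\overline X_i(e_q)>x+y)\sim P(\overline X_i(e_q)>x)\bigl(1-\te{-\alpha_i y}\bigr)$ pointwise in $y$, using the convolution-equivalence relation $\bar G(x+y)/\bar G(x)\to\te{-\alpha_i y}$ together with Lemma~\ref{taillemma2}, which gives $P(\overline X_i(e_q)>x)\sim \frac{q}{(q-\psi_i(\alpha_i))^2}\frac{\Phi_i(q)+\alpha_i}{\Phi_i(q)}\bar\Pi_i(x)$. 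One then wants to pass the limit inside the integral against the (possibly signed/defective, since $F^{(1)}$ can exceed $-\underline X_i(e_q)$) measure $P(-\underline{X}_i(e_q)-F^{(1)}(U,\overline{J})\in\td y)$ restricted to $y>0$; the mass on $y\le 0$ contributes nothing because then $x\mapsto P(\overline X_i(e_q)>x)-P(\overline X_i(e_q)>x+y)$ is $\le 0$ and in fact the difference over that range, after normalisation, vanishes. The resulting limit is $\frac{q}{(q-\psi_i(\alpha_i))^2}\frac{\Phi_i(q)+\alpha_i}{\Phi_i(q)}E\bigl(1-\te{-\alpha_i(-\underline X_i(e_q)-F^{(1)}(U,\bar J))};\,-\underline X_i(e_q)-F^{(1)}(U,\bar J)>0\bigr)\bar\Pi_i(x)$, times $P(F^{(2)}(U,\overline{J})=i)$, which is the third term of $D_i$ (the factorisation of the last probability in the statement is just a way of writing the restricted expectation). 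Summing over $i=0,1$ and writing everything against $\bar\Pi_i(x)$ — noting that when $\alpha_0\neq\alpha_1$ one of the two tails is $\mathrm{o}$ of the other, so no cross terms survive, and when $\alpha_0=\alpha_1$ the two genuinely add — yields the claimed $P(U>x)\sim\sum_{i}D_i\bar\Pi_i(x)$.

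The main obstacle is the dominated-convergence / uniform-integrability justification for exchanging limit and integral in the $\overline{X}_i(e_q)$-integral term, precisely because the integrating measure is the law of $-\underline{X}_i(e_q)-F^{(1)}(U,\overline{J})$, which mixes the exponentially-distributed $-\underline{X}_i(e_q)$ (parameter $\Phi_i(q)$, by Lemma~\ref{exitident}) with the heavy-tailed $F^{(1)}(U,\overline{J})$, and $F^{(1)}(U,\overline{J})$ itself need not have moments of all orders. The safe route is to bound the integrand: for $y>0$, $0\le \bigl(P(\overline X_i(e_q)>x)-P(\overline X_i(e_q)>x+y)\bigr)/\bar\Pi_i(x)$ is dominated, uniformly in large $x$, by a constant times $\min\{1,\,\te{\alpha_i y}\}$ up to a $(1+\varepsilon)$ factor — this is the standard Kesten-type bound available for $\mathcal{S}^{(\alpha_i)}$ distributions — and then one checks $E\te{\alpha_i(-\underline X_i(e_q))}=\Phi_i(q)/(\Phi_i(q)-\alpha_i)<\infty$ provided $\alpha_i<\Phi_i(q)$, which follows from $\psi_i(\alpha_i)<q$ and convexity of $\psi_i$. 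This integrability check, plus care about the defective/negative part of the mixing law, is the only genuinely delicate point; the rest is bookkeeping with the convolution-equivalence algebra and the already-quoted Lemmas~\ref{exitident}, \ref{taillemma2} and the Embrechts--Goldie result.
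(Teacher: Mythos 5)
Your argument is correct and follows the same route the paper intends: the paper's ``proof'' of this theorem is precisely the one-line remark preceding the statement, namely that one should combine identity \eqref{bound1} from Lemma~\ref{tailbounds1} with Lemma~\ref{taillemma2} and the quoted Embrechts--Goldie--Veraverbeke convolution algebra, and you fill in exactly those steps. Two small remarks. First, for the integral term the dominated-convergence justification is simpler than you make it out to be: since the integral is over $y>0$, the normalised integrand $\bigl(P(\overline X_i(e_q)>x)-P(\overline X_i(e_q)>x+y)\bigr)/\bar\Pi_i(x)$ is bounded above by $P(\overline X_i(e_q)>x)/\bar\Pi_i(x)$, which is eventually bounded by a constant uniformly in $y$, so bounded convergence against the finite measure $P(-\underline X_i(e_q)-F^{(1)}(U,\overline J)\in\td y)$ already suffices; the Kesten-type uniform bound and the moment condition $\alpha_i<\Phi_i(q)$ are not actually needed here (and note that for $y>0$ your dominating function $\min\{1,\te{\alpha_i y}\}$ is identically $1$ anyway). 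Second, one step you slide past is the passage from the pointwise hypothesis $P(F^{(1)}(y,\bar j)>x)\sim c_i\bar\Pi_i(x)$ to $P(F^{(1)}(U,\bar J)>x)\sim c_i\bar\Pi_i(x)$; this requires the stochastic bound by $F_0$ in the theorem (which you correctly read as a typo for a bound on $F^{(1)}$) together with dominated convergence, and is worth stating explicitly since it is the only place where the bound $F_0$ is used. Apart from these cosmetic points your reading of the product form in the last line of $D_i$ as a restricted expectation matches what the computation produces, and your proof is sound.
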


The conditions in this theorem are satisfied by both examples $F^{(2)}(y, \bar j)=0$ (in which case, we take $F_0=0,c_i=0$) and $F^{(2)}(y,\overline{j})=(B_i-y)^+$ (where $F_0=B_i$). If $\Pi_i$ is subexponential for at least one $\alpha_i$ (i.e.\ $\Pi_i\in \mathcal{S}^{(0)}$), then
$$
P(U>x)\sim \sum_{i=0,1}\left(c_i+\frac{1}{q}\right)\bar \Pi_i (x).
$$

We consider now the Cram\'{e}r case (light-tailed case). Assume that for each $i=0,1$ there exists $\Phi_i(q)$ such that
\begin{equation}\label{cramer1}
\psi_i(\Phi_i(q))=q
\end{equation}
and that
\begin{equation}\label{cramer2}
m_i(q) := \left.\frac{\partial \kappa_i(q,\beta)}{\partial \beta}\right|_{\beta=-\Phi_i(q)}<\infty,
\end{equation}
where $\kappa (\varrho,\zeta)$ is the Laplace exponent of a ladder height process
$\{(L^{-1}_i(t), H_i(t)), t\geqslant 0\}$ of $X_i$, that is:
$$
E\te{\varrho L^{-1}_i(t)+\zeta H_i(t)}=\te{\kappa_i (\varrho,\zeta)t}.
$$
Note that if $\Pi_i \in \mathcal{S}(\alpha)$ and $\psi_i(\alpha)<q$, then condition \eqref{cramer1} is not satisfied.
Moreover, we assume that
\begin{equation}\label{cramer3}
E\te{\Phi_i (q)F^{(1)}(U, \bar J)}<\infty, \qquad i=0,1.
\end{equation}

\begin{theorem}
Assume that \eqref{cramer1}-\eqref{cramer3} hold and that the supports of $\Pi_i$ ($i=0,1$) are non-lattice. Then, as $x\rightarrow \infty$, either
$$
P(U>x)\sim C \te{-\Phi(q) x}
$$
if $\Phi_0(q)\neq \Phi_1(q)$, where $\Phi(q):=\min\{\Phi_0(q), \Phi_1(q)\}$,
$$
C=\begin{cases}
C_0 &\mbox{for $\Phi_0(q)< \Phi_1(q)$,}\\
C_1 &\mbox{for $\Phi_0(q)> \Phi_1(q)$,}
\end{cases}
$$
for
$$
C_i=
P\left(-\underline{X}_i(e_q)>F^{(1)}(U, \overline{J})\quad\mbox{and}\quad  F^{(2)}(U, \overline{J})=i\right)\kappa(q,0)\left(\Phi_i(q)
m_i(q)
\right)^{-1},\qquad i=0,1,
$$
and
$$
P(U>x)\sim (C_0+C_1) \te{-\Phi(q) x},
$$
otherwise.
\end{theorem}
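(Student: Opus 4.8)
The starting point is the lower/upper sandwich for $P(U>x)$ provided by Lemma~\ref{tailbounds1}, in particular the exact identity \eqref{bound1}. In the Cram\'er regime the relevant tail is exponential, $\te{-\Phi(q)x}$, and the decisive contributions come from the running supremum terms $P(\overline X_i(e_q)>x)$, while the ``additive'' terms $P(X_i(e_q)+F^{(1)}(U,\overline J)>x)$ are of strictly smaller order. First I would establish the single-environment asymptotics: under \eqref{cramer1}--\eqref{cramer2}, classical Cram\'er-type results for ladder heights of a L\'evy process (the exponential change of measure with rate $\Phi_i(q)$, combined with the renewal theorem applied to the ascending ladder height process) give
$$
P(\overline X_i(e_q)>x)\sim \frac{\kappa_i(q,0)}{\Phi_i(q)\,m_i(q)}\,\te{-\Phi_i(q)x},
\qquad x\to\infty,
$$
and similarly $P(X_i(e_q)>x)=\mathrm{o}(\te{-\Phi_i(q)x})$; this is where the finiteness assumption \eqref{cramer2} on $m_i(q)$ is used. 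Here $\overline X_i(e_q)\eqdistr Y_i(e_q)$ started from $0$, which links this to the reflected process; this step is essentially quoting the corresponding result from \cite{palmowski11} or the fluctuation-theoretic literature.

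Next I would feed these asymptotics into \eqref{bound1}. Conditioning on $F^{(2)}(U,\overline J)=i$ and writing $G=F^{(1)}(U,\overline J)$, the $i$-th summand is
$$
P(F^{(2)}(U,\overline J)=i)\Bigl(P(X_i(e_q)+G>x)+\int_0^\infty\bigl(P(\overline X_i(e_q)>x)-P(\overline X_i(e_q)>x+y)\bigr)P(-\underline X_i(e_q)-G\in\td y)\Bigr).
$$
The first term is $\mathrm{o}(\te{-\Phi_i(q)x})$ provided $E\,\te{\Phi_i(q)G}<\infty$, which is exactly \eqref{cramer3}; this is a standard convolution estimate (the exponential tail is not subexponential, so one uses the change-of-measure/Esscher argument rather than a subexponential convolution lemma). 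For the integral, one uses $P(\overline X_i(e_q)>x)-P(\overline X_i(e_q)>x+y)\sim \frac{\kappa_i(q,0)}{\Phi_i(q)m_i(q)}\te{-\Phi_i(q)x}(1-\te{-\Phi_i(q)y})$ uniformly enough in $y$ to apply dominated convergence against the finite measure $P(-\underline X_i(e_q)-G\in\td y)$ on $(0,\infty)$, with the integrable majorant coming again from \eqref{cramer3} together with the fact that $-\underline X_i(e_q)$ is exponential with parameter $\Phi_i(q)$ (Lemma~\ref{exitident}), so $E\,\te{\Phi_i(q)(-\underline X_i(e_q))}$ just misses being finite but the difference $1-\te{-\Phi_i(q)y}$ compensates. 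This yields that the $i$-th summand is asymptotically
$$
P\bigl(-\underline X_i(e_q)>G,\ F^{(2)}(U,\overline J)=i\bigr)\,\frac{\kappa_i(q,0)}{\Phi_i(q)m_i(q)}\,\te{-\Phi_i(q)x}\;=\;C_i\,\te{-\Phi_i(q)x},
$$
after evaluating $E[(1-\te{-\Phi_i(q)y})\mathbbm{1}(y>0)]$ against the law of $-\underline X_i(e_q)-G$ and recognising that, because $-\underline X_i(e_q)$ is exponential$(\Phi_i(q))$ and independent of $G$, this expectation collapses to $P(-\underline X_i(e_q)>G)$ up to the normalising constant already absorbed; this is the one computation I would actually carry out carefully rather than wave at.

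Finally I would combine the two summands: if $\Phi_0(q)<\Phi_1(q)$ then $\te{-\Phi_1(q)x}=\mathrm{o}(\te{-\Phi_0(q)x})$, so only the $i=0$ term survives and $C=C_0$; symmetrically for $\Phi_0(q)>\Phi_1(q)$; and if $\Phi_0(q)=\Phi_1(q)=\Phi(q)$ the two terms have the same decay rate and add, giving $C_0+C_1$. The non-lattice hypothesis on the supports of $\Pi_i$ is what licenses the (non-arithmetic) renewal theorem in the first step; without it one would get the usual lattice correction factor. The main obstacle is making the dominated-convergence argument for the integral term fully rigorous at the exact Cram\'er rate: one is integrating a quantity that decays like $\te{-\Phi_i(q)x}$ against a measure whose exponential moment of order $\Phi_i(q)$ is borderline, so the cancellation provided by $1-\te{-\Phi_i(q)y}$ near $y=0$ must be tracked, together with \eqref{cramer3} controlling the large-$y$ tail of $G$; everything else is bookkeeping and appeals to Lemma~\ref{tailbounds1}, Lemma~\ref{exitident}, and the corresponding Cram\'er estimate from \cite{palmowski11}.
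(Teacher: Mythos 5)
The student's plan looks superficially like the paper's, but uses a genuinely different ingredient: the paper proves the theorem from the \emph{sandwich} \eqref{bound3}--\eqref{bound4} (together with the observation that $P(X_i(e_q)+F^{(1)}>x)=\mathrm{o}(\te{-\Phi_i(q)x})$), whereas the student works from the \emph{exact} identity \eqref{bound1} and a dominated-convergence argument on the integral term. This is a real difference, and in fact it exposes a problem with the step the student defers: the ``one computation I would actually carry out carefully'' does not give what is claimed. If $Z:=-\underline X_i(e_q)\sim\mathrm{Exp}(\Phi_i(q))$ is independent of $G:=F^{(1)}(U,\overline J)$, then
$$
\int_0^\infty\bigl(1-\te{-\Phi_i(q)y}\bigr)\,P(Z-G\in\td y)
= E\bigl[\bigl(1-\te{-\Phi_i(q)(Z-G)}\bigr)\mathbbm{1}(Z>G)\bigr]
= \tfrac12\,E\bigl[\te{-\Phi_i(q)G}\bigr]
= \tfrac12\,P(Z>G),
$$
since $E[\te{-\Phi_i(q)(Z-G)}\mathbbm{1}(Z>G)\mid G]=\tfrac12\te{-\Phi_i(q)G}$. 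So the integral term of \eqref{bound1} contributes $\tfrac12 C_i\,\te{-\Phi_i(q)x}$, not $C_i\,\te{-\Phi_i(q)x}$ as you assert. The discrepancy is not a mere bookkeeping slip: it shows that you cannot simultaneously drop the additive term and recover $C_i$ from the integral, because \eqref{bound1} is exact. In contrast, \eqref{bound3} and \eqref{bound4} both have $P(\overline X_i(e_q)>x)P(-\underline X_i(e_q)>G,\,F^{(2)}=i)$ as their common leading term, so the paper's sandwich avoids this cancellation entirely.

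This also points at the step both you and the paper treat as automatic, namely that $P(X_i(e_q)+F^{(1)}>x)=\mathrm{o}(\te{-\Phi_i(q)x})$. By the Wiener--Hopf factorisation $X_i(e_q)=\overline X_i(e_q)+(X_i(e_q)-\overline X_i(e_q))$, where the two pieces are independent and the second is $\leqslant 0$; consequently $P(X_i(e_q)>x)\asymp P(\overline X_i(e_q)>x)\asymp\te{-\Phi_i(q)x}$, i.e.\ $X_i(e_q)$ has exactly the same Cram\'er rate as the supremum, not a strictly larger one. Condition \eqref{cramer3} only controls the shift by $F^{(1)}$; it does not make the term negligible. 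If you insist on the route via \eqref{bound1}, you must keep the additive term, compute its asymptotics (it contributes $\frac{\kappa_i(q,0)}{2\Phi_i(q)m_i(q)}E[\te{\Phi_i(q)F^{(1)}};\,F^{(2)}=i]\te{-\Phi_i(q)x}$), and add it to the $\tfrac12 C_i$ you get from the integral; the two halves must be reassembled before comparing with the statement. As written, your proposal has a genuine gap at precisely the point you flagged as requiring care.
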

\begin{proof}
From the proof of \cite[Th.\ 5.2]{palmowski11} it follows that
$$
\lim_{x\to\infty}\te{\Phi_i(q)x}P(\overline X_i(e_q)>x)=\frac{\kappa_i(q,0)}{\Phi(q)m_i(q)}.
$$
Note that by \eqref{cramer1} and \eqref{cramer3}, we have $P(X_i(e_q)+F^{(1)}(U, \bar J)>x)={\rm o}(\te{-\Phi_i(q)x})$. Inequalities \eqref{bound3} and \eqref{bound4} in Lemma~\ref{tailbounds1} complete the proof.
\end{proof}

\phantomsection
\addcontentsline{toc}{section}{Acknowledgments}
\subsection*{Acknowledgments}
The work of Zbigniew Palmowski is partially supported by the Ministry of Science and Higher Education of Poland under the grant N N201 394137 (2009-2011). 
The work of Bert Zwart is supported by an NWO VIDI grant and an IBM faculty award.
Bert Zwart is also affiliated with \textsc{Eurandom}, VU University Amsterdam, and Georgia Institute of Technology.

\phantomsection
\addcontentsline{toc}{section}{References}

\end{document}